\numberwithin{equation}{section}
\newtheorem{thm}{Theorem}[section]
\newtheorem{cor}{Corollary}[section]
\newtheorem{rem}{Remark}[section]
\newtheorem{pro}{Proposition}[section]
\newtheorem{lemma}{Lemma}[section]
\begin{document}
\markboth{R. Rajkumar and P. Devi}{}
\title{Inclusion graph of subgroups of a group}

\author{P. Devi\footnote{e-mail: {\tt pdevigri@gmail.com}},\ \ \
	R. Rajkumar\footnote{e-mail: {\tt rrajmaths@yahoo.co.in}}\\
	{\footnotesize Department of Mathematics, The Gandhigram Rural Institute -- Deemed University,}\\ \footnotesize{Gandhigram -- 624 302, Tamil Nadu, India.}\\[3mm]
}
\date{}
%\address{dfdssd}
%\keywords{Example, Applied mathematics, Journal}
\maketitle
\begin{abstract}
For a finite group $G$, we define the \textit{inclusion graph of subgroups of $G$}, denoted by $\mathcal I(G)$, is a graph having all the proper subgroups of 
$G$ as its vertices and two distinct vertices $H$ and $K$  in $\mathcal I(G)$ are adjacent if and only if either $H \subset K$ or $K \subset H$. 
 In this paper, we classify the finite groups whose inclusion graph of subgroups is one of complete, bipartite, tree, star, path,  cycle, disconnected,  claw-free. Also we classify the finite abelian groups whose inclusion graph of subgroups is planar. For any given finite group, we estimate the clique number, chromatic number, girth of its inclusion graph of subgroups and for a finite abelian group, we estimate the diameter of its inclusion graph of subgroups. Among the other results we show that some groups can be determined by their inclusion graph of subgroups
 \paragraph{Keywords:}Inclusion graph of subgroups, bipartite graph, clique number,  girth, diameter, planar graph.
 \paragraph{2010 Mathematics Subject Classification:} 05C25, 05C05,  05C10, 20K27, 20E15.

\end{abstract}

% \keywords{finite groups; infinite groups; subgroups; unicyclic; claw-free.}

% \ccode{2000 Mathematics Subject Classification: 05C25, 05C10, 20K27, 20K01, 20D10, 20D40}

\section{Introduction} \label{sec:int}
The properties of an algebraic structure can be investigated in several ways. One of the ways is by associating a suitable graph to that algebraic structure and analyzing the properties of the associated graph by using graph theoretic methods. The subgroup lattice and subgroup graph of a group are well known graphs associated with a group (cf. \cite{boh-reid}, \cite{smith1}, \cite{smith}, \cite{star}). The intersection graph of subgroups of a group is another interesting graph associated with a group (cf. \cite{akbari_2}, \cite{csak}, \cite{Shen}). For a group $G$, the intersection graph of subgroups of $G$, denoted by $\mathscr{I}(G)$, is a graph having all the proper subgroups of $G$ as  its vertices and two distinct vertices are adjacent if and only if  the corresponding subgroups intersects non-trivially. In \cite{akbaris}, S. Akbari et al assigned a graph to ideals of a ring as follows: 
For a ring $R$ with unity, the inclusion ideal graph of $R$ is a graph whose vertices are
all non-trivial left ideals of $R$ and two distinct left ideals $I$ and $J$ are adjacent if and only if $I \subset J $ or $J \subset I$.

 Motivated by this, in this paper,  we define the following: For a finite group $G$, the \textit{inclusion graph of subgroups of $G$}, denoted by $\mathcal I(G)$, is a graph having all the proper subgroups of 
$G$ as its vertices and two distinct vertices $H$ and $K$  in $\mathcal I(G)$ are adjacent if and only if  $H \subset K$ or $K \subset H$. 

%Note that $\mathcal I(G)$ can be defined only for the groups $G$ other than the trivial group or the groups of prime order. So, unless otherwise mentioned, through out this paper
%we consider only groups other than these.

For a group $G$, its subgroup lattice is denoted by $L(G)$. The \textit{height} of $L(G)$ is the length of the longest chain in $L(G)$ through partial order from greatest to least. We denote the order of an element $a \in \mathbb Z_n$ by $\text{ord}_n(a)$ . The number of Sylow $p$-subgroups of a group $G$ is denoted by $n_p(G)$. 
%For any integer $n \geq 2$, the generalized Quaternion group of order $2^n$ is given by
%$Q_{2^n} = \big < a, b ~|~a^{2^{n-1}} = b^4 = 1, a^{2^{n-2}} = b^2 = 1,
%bab^{-1} = a^{-1}\big >$. For any $\alpha\geq 3$ and a prime $p$, the Modular group of order $p^\alpha$ is given by
%$M_{p^\alpha}=\langle a,b~|~a^{p^{\alpha-1}}=b^p=1, bab^{-1}=a^{p^{\alpha-2}+1}\rangle$. $S_n$ and $A_n$ are symmetric and alternating groups of degree $n$, respectively. Recall that $SL_m (n)$ is the group of $m \times m$ matrices having determinant equal to $1$,
%whose entries lie in a field with $n$ elements, and that $L_m (n) = SL_m (n)/H$,
%where $H = \{kI|k^m = 1\}$. For any prime $q\geq 3$, the  Suzuki group is denoted by $Sz(2^{q})$.

Now we recall some basic definitions and notations of graph theory. We use the standard terminology of graphs (e.g., see~\cite{harary}). Let $G$ be a simple graph with vertex set $V(G)$ and edge set $E(G)$. $G$ is said to be
\emph{$k$-partite}  if $V(G)$ can be partition into $k$ disjoint subsets $V_i$, $i=1,2, \ldots ,k$ such that every edge joins a vertex of $V_i$ to a vertex of $V_j$, $i\neq j$. A $k$-partite graph is said to be a \emph{complete $k$-partite} if every vertex in each partition is adjacent with all the vertices in the remaining partitions and is denoted by $K_{m_1, m_2, \ldots , m_k}$, 
where $m_i=|V_i|$, $i=1,\ldots, k$. The graph $K_{1,m}$ is called a \emph{star} and the graph $K_{1,3}$ is called a \emph{claw}. A graph in which any two distinct vertices are adjacent 
is said to be \emph{complete}. A graph whose edge set is empty is said to be \emph{totally disconnected}.  Two graphs $G_1$, $G_2$ are isomorphic if there exists 
a bijection from $V(G_1)$ to $V(G_2)$ preserving the adjacency.  A \textit{path} connecting two vertices $u$ and $v$ in $G$ is a finite sequence  $(u=) v_0, v_1, \ldots, v_n (=v)$ of distinct vertices (except,
possibly, $u$ and $v$) such that $u_i$ is adjacent to $u_{i+1}$ for all $i=0, 1, \ldots , n-1$.
A path is a \emph{cycle} if $u=v$. The length of a path or a cycle is the number of edges in it. A path or a cycle of length $n$ is denoted by $P_n$ or $C_n$, respectively. A graph $G$ is said to be \emph{connected} if any two vertices are connected by a path; otherwise $G$ is said to be \emph{disconnected}. A connected graph with out a cycle is called a \emph{tree}. For a connected graph $G$, its \emph{diameter}, denoted by $diam(G)$, is the maximum of length of a 
shortest path of any two vertices. If $G$ is disconnected, then we define $diam(G)=\infty$. 
The \emph{girth} of a graph $G$, denoted by $girth(G)$, is the length of a shortest cycle in $G$, if it exist; otherwise, we define $girth(G)=\infty$. A \emph{clique}  is a set of vertices in $G$ such that any two
are adjacent. The \emph{clique number} $\omega (G)$ of $G$ is the cardinality of a largest  clique in $G$. The \emph{chromatic number}
$\chi (G)$ of $G$ is the smallest number of colors needed to
color the vertices of $G$ such that no two adjacent vertices gets the same color.
 A graph is said to be \emph{planar} if it can be drawn on a plane such that no two edges intersect, except, possibly at their end vertices.
%A closed trail containing every edges of a graph is called the 
%\emph{eluer trail}. A graph is said to be \emph{Eluerian} if it contains a eluer trail. Also equivalently a graph is eluerian if and only if every vertex has 
%even degree. A graph $G$ is said to be \emph{Hamiltonian} if it contains a spanning cycle. 
We define a graph $G$ to be \emph{$X$-free} if it does not contain a subgraph isomorphic to a given graph $X$. $\overline{G}$ denotes the complement of a graph $G$. For two graphs $G$ and $H$, $G\cup H$ denotes the disjoint union of $G$ and $H$.
% $G+H$ denotes a graph with the vertex set composed of the vertices of $G$ and $H$ and the edge set composed of the edges of $G$ and $H$ plus all the edges $uv$ such that $u\in G$ and $v\in H$.

In this paper, we classify all the finite groups whose inclusion graph of subgroups is one of complete, bipartite, tree, star, path,  cycle, claw-free, disconnected (cf. Theorems~\ref{inclusion graph 112},~\ref{inclusion graph 3},~\ref{incl 121}, ~\ref{inclusion graphs 7}, Corollaries~\ref{inclusion graphs 3},~\ref{inclusion graphs 4}). Also we give the classification of finite abelian groups whose inclusion graph of subgroups is planar (cf. Theorem~\ref{inclusion graphs 10}). For any given finite group, we estimate the clique number, chromatic number, girth of its inclusion graph of subgroups (cf. Theorem~\ref{inclusion graphs 7}, Corollary~\ref{inclusion graph c4}) and for a finite abelian group, we estimate the diameter of its inclusion graph of subgroups (cf. Theorem~\ref{inclusion graphs 11}). Moreover, we show that some groups can be determined by their inclusion graph of subgroups (cf. Corollary~\ref{inclusion graphs 15}). In this sequel, we show some interesting connections between the inclusion graph of subgroups of a group,  its subgroup lattice and its intersection graph of subgroups (cf. Theorems~\ref{incl l2},~\ref{inclusion graph 1},~\ref{inclusion graph 3},~\ref{inclusion graph 2},~\ref{inclusion graphs 41} and Corollary~\ref{inclusion graph c4}).

% \begin{thm}\label{t1}(\cite[Proposition 1.3]{scott})
% 	Suppose that a $p$-group $G$ of order $p^n$ has only one non-trivial subgroup $S$ of order $p^s$ for some $0<s<n$. 
% 	Then $G$ is cyclic, unless $s= 1$ and $G \cong Q_{2^n}$.
% \end{thm}
% 
% \begin{thm}\label{t17}(\cite[Theorem IV, p. 129]{burn})
% 	Let $p$ be a prime. Then the number of subgroups of any given order $p^s$ of a
% 	group of order $p^m \equiv 1$ (mod $p$). 
% \end{thm}

\section{Main results}\label{sec: inclbas}

\begin{thm}\label{inclusion graphs 14}
 Let $G_1$ and $G_2$ be groups. If $G_1\cong G_2$, then $\mathcal I(G_1)\cong \mathcal I(G_2)$.
\end{thm}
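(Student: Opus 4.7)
The plan is to lift a group isomorphism $\varphi : G_1 \to G_2$ to a graph isomorphism $\Phi : \mathcal{I}(G_1) \to \mathcal{I}(G_2)$ in the most natural way, namely by setting $\Phi(H) := \varphi(H)$ for every proper subgroup $H$ of $G_1$. So the first step is to verify that $\Phi$ is well-defined on the vertex set of $\mathcal{I}(G_1)$: since $\varphi$ is a homomorphism, $\varphi(H)$ is a subgroup of $G_2$, and since $\varphi$ is a bijection, $\varphi(H) = G_2$ would force $H = G_1$, so $\varphi$ sends proper subgroups to proper subgroups.

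Next I would check that $\Phi$ is a bijection on vertex sets. Injectivity follows from the injectivity of $\varphi$: if $\varphi(H) = \varphi(K)$, then applying $\varphi^{-1}$ gives $H = K$. Surjectivity follows by the symmetric argument applied to $\varphi^{-1}$: any proper subgroup $L$ of $G_2$ is the image under $\Phi$ of the proper subgroup $\varphi^{-1}(L)$ of $G_1$.

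Finally I would verify adjacency preservation. By the definition of $\mathcal{I}(G_1)$, distinct vertices $H, K$ are adjacent if and only if $H \subset K$ or $K \subset H$. Since $\varphi$ is a bijection, it preserves and reflects strict containment: $H \subset K$ if and only if $\varphi(H) \subset \varphi(K)$. Hence $H$ and $K$ are adjacent in $\mathcal{I}(G_1)$ if and only if $\Phi(H)$ and $\Phi(K)$ are adjacent in $\mathcal{I}(G_2)$, which is exactly what it means for $\Phi$ to be a graph isomorphism.

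There is no real obstacle here; the only thing to be careful about is the distinction between proper and improper subgroups in checking that $\Phi$ maps vertices to vertices, and the fact that one must preserve \emph{strict} inclusion (not just inclusion) because the adjacency condition is $H \subset K$ or $K \subset H$ with $H \ne K$. Both points are handled by bijectivity of $\varphi$.
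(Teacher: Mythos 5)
Your proof is correct and takes essentially the same approach as the paper: both lift the group isomorphism $\varphi$ to the map $H \mapsto \varphi(H)$ on proper subgroups and observe that this is a graph isomorphism. The paper states this in one line, whereas you spell out the routine verifications (well-definedness on proper subgroups, bijectivity, and preservation of strict inclusion in both directions), all of which are accurate.
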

\begin{proof}
If $\phi$ is a group isomorphism from $G_1$ into $G_2$, then the map $\psi:V(\mathcal I(G_1))\rightarrow V(\mathcal I(G_2))$ by $\psi(H)=\phi(H)$, for all $H\in V(\mathcal I(G_1))$ is a graph 
isomorphism. 
\end{proof}

\begin{rem}\label{incl n1}
The converse of the above theorem is not true. For example, let $G_1= \mathbb Z_3\times \mathbb Z_3$ and $G_2= S_3$, then it is easy to see that
 $\mathcal I(G_1)\cong \overline{K}_4\cong \mathcal I(G_2)$, but $G_1 \ncong G_2$.
\end{rem}

\begin{thm}\label{incl l2}
Let $G_1$ and $G_2$ be groups. If $L(G_1)\cong L(G_2)$, then $\mathcal I(G_1)\cong \mathcal I(G_2)$.
\end{thm}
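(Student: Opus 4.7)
The plan is to extract the graph isomorphism directly from the lattice isomorphism, using the fact that the inclusion graph is essentially the comparability graph of the proper part of the subgroup lattice.

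First I would observe the structural reformulation: the vertex set of $\mathcal{I}(G)$ is $L(G)\setminus\{G\}$, and by definition two distinct vertices $H,K$ are adjacent precisely when they are comparable under the partial order of $L(G)$ (i.e.\ one is a proper subset of the other). Thus $\mathcal{I}(G)$ depends on $G$ only through the poset structure of the proper part of $L(G)$.

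Next, given a lattice isomorphism $\phi:L(G_1)\to L(G_2)$, I would note that $\phi$ is an order isomorphism, and in particular it must send the maximum element of $L(G_1)$ (namely $G_1$) to the maximum element of $L(G_2)$ (namely $G_2$), since the top element is characterized purely order-theoretically. Consequently, the restriction $\psi:=\phi|_{L(G_1)\setminus\{G_1\}}$ is a bijection onto $L(G_2)\setminus\{G_2\}$, which is the vertex set of $\mathcal{I}(G_2)$.

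Finally, I would verify that $\psi$ preserves adjacency in both directions: for distinct proper subgroups $H,K$ of $G_1$, $H\subset K$ if and only if $H<K$ in $L(G_1)$, if and only if $\phi(H)<\phi(K)$ in $L(G_2)$ (since $\phi$ preserves and reflects the strict order), if and only if $\psi(H)\subset \psi(K)$ in $G_2$. The same equivalence holds with the roles of $H$ and $K$ reversed, so $H$ and $K$ are adjacent in $\mathcal{I}(G_1)$ iff $\psi(H)$ and $\psi(K)$ are adjacent in $\mathcal{I}(G_2)$. Hence $\psi$ is a graph isomorphism.

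There is no serious obstacle here: the only subtlety worth flagging is the need to confirm that $\phi$ sends top to top (so that ``proper'' is preserved), but this is immediate from the fact that a lattice isomorphism preserves the greatest element.
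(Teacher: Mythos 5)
Your proof is correct and takes essentially the same approach as the paper's: restrict the lattice isomorphism to the proper subgroups and check that it preserves and reflects comparability, hence adjacency. The only cosmetic point is that the paper's vertex set also excludes the trivial subgroup (its $\mathcal I(\mathbb Z_{p^\alpha})\cong K_{\alpha-1}$ makes this explicit), which your argument handles identically since a lattice isomorphism sends the least element to the least element.
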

\begin{proof}
Let $\phi$ be a lattice isomorphism from $L(G_1)$ into $L(G_2)$. Define a map $\psi:V(\mathcal I(G_1)) \rightarrow V(\mathcal I(G_2))$ by $\psi(H)=\phi(H)$, for all $H\in V(\mathcal I(G_1))$. 
Since $\phi$ is bijective, so is $\psi$.
Suppose $H_1$ and $H_2$ are adjacent in $\mathcal I(G_1)$, then either $H_1\subset H_2$ or $H_2\subset H_1$. Since $\phi$ is a lattice isomorphism, 
so it preserves the meet and order and so 
either $\phi(H_1)\subset \phi(H_2)$ or $\phi(H_2)\subset \phi(H_1)$. It follows that $\psi(H_1)$ and $\psi(H_2)$ are adjacent in $\mathcal I(G_2)$. By following a similar argument as above, it is easy to see that if $\psi(H_1)$ and $\psi(H_2)$ are adjacent in $\mathcal I(G_2)$, then $H_1$ and $H_2$ are adjacent in $\mathcal I(G_1)$. Thus $\psi$ is a graph isomorphism.  Hence the proof.
\end{proof}

\begin{thm}\label{inclusion graph 13}
	Let $G$ be any group and $N$ be a subgroup of $G$. Then $\mathcal I(N)$ is a subgraph of $\mathcal I(G)$. In addition, $N$ is a normal subgroup,
	then $\mathcal I(G/N)$ is isomorphic (as a graph) to a subgraph of $\mathcal I(G)$.
\end{thm}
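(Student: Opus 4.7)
The plan is to handle the two assertions separately, each by constructing a vertex-set injection and checking that the adjacency relation is both preserved and reflected.

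For the first part, observe that every proper subgroup $H$ of $N$ is automatically a proper subgroup of $G$, since $H \subsetneq N \subseteq G$ forces $H \subsetneq G$. Thus $V(\mathcal I(N)) \subseteq V(\mathcal I(G))$. For distinct proper subgroups $H_1, H_2$ of $N$, the adjacency criterion $H_1 \subset H_2$ or $H_2 \subset H_1$ defining an edge in $\mathcal I(N)$ is literally the same condition used in $\mathcal I(G)$, so $E(\mathcal I(N)) \subseteq E(\mathcal I(G))$. This realises $\mathcal I(N)$ as an (induced) subgraph of $\mathcal I(G)$, giving the first assertion.

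For the second part, assume $N$ is normal in $G$. By the correspondence (fourth isomorphism) theorem, the map $H/N \mapsto H$ is an inclusion-preserving bijection from the set of subgroups of $G/N$ onto the set of subgroups of $G$ containing $N$. Define $\psi : V(\mathcal I(G/N)) \to V(\mathcal I(G))$ by sending a proper subgroup $H/N$ of $G/N$ to $H$; since $H/N \neq G/N$ forces $H \neq G$, the image $\psi(H/N) = H$ is indeed a proper subgroup of $G$, and the correspondence theorem guarantees that $\psi$ is injective. For distinct proper subgroups $H_1/N$ and $H_2/N$ of $G/N$, the same theorem yields $H_1/N \subsetneq H_2/N$ if and only if $H_1 \subsetneq H_2$ (and symmetrically), so $H_1/N$ and $H_2/N$ are adjacent in $\mathcal I(G/N)$ if and only if $\psi(H_1/N)$ and $\psi(H_2/N)$ are adjacent in $\mathcal I(G)$. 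Hence $\psi$ is a graph isomorphism from $\mathcal I(G/N)$ onto the subgraph of $\mathcal I(G)$ induced by $\{H : N \leq H \lneq G\}$.

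The proof is essentially a translation exercise: no step poses a conceptual obstacle. The only point needing a moment of care is invoking the correspondence theorem in both directions, so that strict inclusion among subgroups of $G/N$ matches strict inclusion among the corresponding overgroups of $N$ in $G$; this bi-implication is exactly what promotes $\psi$ from a mere graph homomorphism to a graph isomorphism onto its image.
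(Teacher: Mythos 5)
Your proof is correct and follows essentially the same route as the paper: the first part is the observation that proper subgroups of $N$ are proper subgroups of $G$ with the same inclusion relation, and the second part is the correspondence theorem applied to subgroups containing $N$. You are merely more explicit than the paper in verifying injectivity and the two-way equivalence of strict inclusions, which is a harmless (indeed welcome) amount of extra care.
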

\begin{proof}
	The first result is obviously true. Any subgroup of $G/N$ is of the form $H/N$, where $H$ is a subgroup of $G$ containing $N$. Here two proper subgroups $H/N$, $K/N$ 
	are adjacent in $\mathcal I(G/N)$ if and only if either $H/N \subset K/N$ or $K/N \subset H/N$. This implies that either $H \subset K$ or $K \subset H$ and so $H$ and $K$ are 	adjacent in $\mathcal I(G)$. This completes the proof.
\end{proof}

\begin{thm}\label{inclusion graph 112}
	Let $G$ be a finite group. Then $\mathcal I(G)$ is complete if and only if $G\cong \mathbb Z_{p^\alpha}$, where $p$ is a prime and $\alpha >1$. Moreover, $\mathcal I(\mathbb Z_{p^\alpha})\cong K_{\alpha-1}$.
\end{thm}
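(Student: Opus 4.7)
The plan is to handle the two directions separately. For the backward direction together with the ``moreover'' clause, I would use the elementary fact that the subgroups of $\mathbb{Z}_{p^\alpha}$ are exactly the cyclic groups of order $p^k$ for $0\le k\le\alpha$, and that these form a single chain under inclusion. Removing the trivial subgroup and $\mathbb{Z}_{p^\alpha}$ itself leaves exactly $\alpha-1$ proper non-trivial subgroups, every pair of which is comparable. Hence $\mathcal{I}(\mathbb{Z}_{p^\alpha})\cong K_{\alpha-1}$, and in particular $\mathcal{I}(\mathbb{Z}_{p^\alpha})$ is complete.

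For the forward direction, assume $\mathcal{I}(G)$ is complete. Then $\mathcal{I}(G)$ must contain at least one vertex, so $|G|$ is neither $1$ nor a prime. Completeness says every pair of proper non-trivial subgroups of $G$ is comparable under $\subseteq$; adjoining $\{e\}$ (contained in everything) and $G$ itself (containing everything) preserves total comparability, so the full subgroup lattice $L(G)$ is a chain. It remains to show that any finite group whose subgroup lattice is a chain must be isomorphic to $\mathbb{Z}_{p^\alpha}$ for some prime $p$ and some $\alpha\ge 2$.

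I would argue this in two steps. Step (i): $|G|$ is a prime power. If two distinct primes $p\ne q$ divided $|G|$, Cauchy's theorem would provide subgroups of orders $p$ and $q$ (both proper non-trivial, hence both vertices of $\mathcal{I}(G)$); since their orders are coprime neither can contain the other, contradicting the chain condition. Step (ii): $G$ is cyclic. Choose $g\in G$ of maximal order and suppose for contradiction that $\langle g\rangle\ne G$. Pick $h\in G\setminus\langle g\rangle$; then $\langle h\rangle\not\subseteq\langle g\rangle$, so the chain condition forces $\langle g\rangle\subsetneq\langle h\rangle$, which yields $|h|>|g|$ and contradicts the maximality of $|g|$. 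Thus $G=\langle g\rangle$, so $G\cong\mathbb{Z}_{p^\alpha}$ with $\alpha\ge 2$. The central difficulty here is really step (ii); step (i) and the backward direction are essentially lattice bookkeeping once the chain condition is in hand.
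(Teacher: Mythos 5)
Your proof is correct and follows the same route as the paper: identify completeness of $\mathcal I(G)$ with the subgroup lattice $L(G)$ being a chain, and then invoke the classification of finite groups whose subgroup lattice is a chain as exactly the cyclic $p$-groups. The only difference is that the paper treats that classification as known and states it in one line, whereas you supply the (correct) Cauchy-plus-maximal-order argument proving it.
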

\begin{proof}
If $G\cong \mathbb Z_{p^\alpha}$,  where $p$ is a prime and $\alpha >1$, then $L(G)$ is a chain of length $\alpha$ and so $\mathcal I(G)\cong K_{\alpha-1}$. 
If $G\ncong \mathbb Z_{p^\alpha}$, then $L(G)$ is not a chain and so there exists at least 
two subgroups  $H_1$ and $H_2$ of $G$ such that $H_1 \nsubseteq H_2$ and $H_2 \nsubseteq H_1$. It follows that $\mathcal I(G)$ is not complete.
\end{proof}

%\section{Structure for inclusion graphs}

\begin{thm}\label{inclusion graph 1}
Let $G$ be a finite group. Then $\mathcal I(G)$ is $(k-1)$-partite, where $k$ is the height of $L(G)$.
\end{thm}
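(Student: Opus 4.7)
My plan is to exhibit an explicit $(k-1)$-partition of $V(\mathcal I(G))$ by assigning to each proper subgroup a level equal to its height in the subgroup lattice $L(G)$. Concretely, for each vertex $H$ of $\mathcal I(G)$ I would set $r(H)$ to be the maximum length of a strictly increasing chain $\{e\}=H_0\subsetneq H_1\subsetneq\cdots\subsetneq H_m=H$ in $L(G)$, and then define $V_i:=\{H\in V(\mathcal I(G)):r(H)=i\}$ for positive integers $i$.

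The first step is to show that $r$ takes values in $\{1,2,\ldots,k-1\}$, so that the family $\{V_i\}_{i=1}^{k-1}$ really is a partition of $V(\mathcal I(G))$ into at most $k-1$ parts. That $r(H)\ge 1$ is immediate because $\{e\}\subsetneq H$ is already a chain of length $1$. For the upper bound, if $r(H)=m$, then there is a chain $\{e\}=H_0\subsetneq\cdots\subsetneq H_m=H\subsetneq G$ of length $m+1$ in $L(G)$, so $m+1\le k$ by the definition of height, giving $r(H)\le k-1$.

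The second and final step is to check that no edge of $\mathcal I(G)$ lies inside a single block $V_i$. Suppose $H_1,H_2\in V_i$ are adjacent; then one strictly contains the other, say $H_1\subsetneq H_2$. Taking any chain $\{e\}\subsetneq\cdots\subsetneq H_1$ of length $i$ realizing $r(H_1)=i$ and appending $H_2$ produces a chain of length $i+1$ ending at $H_2$, which contradicts $r(H_2)=i$. Hence every edge of $\mathcal I(G)$ crosses between two distinct parts, and $\mathcal I(G)$ is $(k-1)$-partite.

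I do not foresee any serious obstacle in this argument. The one piece of bookkeeping that warrants care is the upper bound $r(H)\le k-1$: it relies on the fact that a proper subgroup can always be extended upward by one step to $G$ inside $L(G)$, so a longest chain terminating at a proper subgroup is strictly shorter than the global height of $L(G)$. Once that is pinned down, the remainder is a routine rank-function argument on the subgroup lattice.
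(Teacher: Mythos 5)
Your argument is correct, and it is genuinely different from (and in fact more robust than) the one in the paper. The paper builds its partition from the top down: $\mathcal M_1$ is the set of maximal subgroups of $G$ and $\mathcal M_i$ is the set of maximal subgroups of members of $\mathcal M_{i-1}$; it then asserts that the $\mathcal M_i$ partition the vertex set into independent sets. You instead colour each vertex by the rank $r(H)$, the length of a longest chain from $\{e\}$ to $H$, and the whole proof reduces to the observation that $r$ is strictly increasing along proper inclusions, which you verify by the chain-extension argument. The difference is not cosmetic: subgroup lattices need not satisfy the Jordan--Dedekind chain condition, so maximal chains between the same pair of subgroups can have different lengths, and then the paper's sets $\mathcal M_i$ are neither pairwise disjoint nor independent. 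In $S_4$, for example, $\langle (1\,2)\rangle$ is maximal in a point stabilizer isomorphic to $S_3$ (a maximal subgroup of $S_4$), while $\{e,(1\,2),(3\,4),(1\,2)(3\,4)\}$ is maximal in a Sylow $2$-subgroup (also maximal in $S_4$); both therefore lie in $\mathcal M_2$, yet one properly contains the other. Your monotone rank function sidesteps this issue entirely, takes values in $\{1,\dots,k-1\}$ exactly as you check, and a longest chain in $L(G)$ even shows that every colour class is nonempty; the same argument applied to the dual rank (longest chain from $H$ up to $G$) would give a correct top-down version if one wanted to stay closer to the paper's phrasing.
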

\begin{proof}
Let $\mathcal M_1$ be the set of all maximal subgroups of $G$ and for each $i = 2, 3, \ldots, k-1$, let $\mathcal M_i$ be the set of all maximal subgroups of the subgroups in $\mathcal  M_{i-1}$. Then $\{\mathcal M_i\}_{i=1}^{k-1}$ is a partition of the vertex set of $\mathcal I(G)$. Also no two vertices in
a same partition are adjacent in $\mathcal I(G)$. Moreover, $k-1$ is the minimal number such that a $k$-partition
of the vertex set of $\mathcal I(G)$ is having this property, since  the height of $L(G)$ is $k$. It follows that $\mathcal I(G)$ is $(k-1)$-partite.
\end{proof}

The next result is an immediate consequence of Theorem~\ref{inclusion graph 1}, the definition of the clique number and chromatic number of a graph.

\begin{cor}\label{inclusion graph c4}
	Let $G$ be a finite group. Then $\omega(\mathcal I(G))=k-1=\chi(\mathcal I(G))$, where $k$ is the height of $L(G)$.
\end{cor}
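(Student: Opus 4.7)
The plan is to sandwich both invariants between $k-1$ from above and $k-1$ from below, using Theorem~\ref{inclusion graph 1} to control $\chi(\mathcal I(G))$ and an explicit maximal chain in $L(G)$ to control $\omega(\mathcal I(G))$. The universal inequality $\omega(\Gamma) \leq \chi(\Gamma)$, valid for any graph $\Gamma$, then closes the sandwich and collapses everything to a single value.

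First I would invoke Theorem~\ref{inclusion graph 1}: it partitions $V(\mathcal I(G))$ into $k-1$ sets, each of which is independent (no two vertices in the same class are adjacent). Assigning a distinct color to each of these $k-1$ classes therefore yields a proper vertex coloring of $\mathcal I(G)$, so $\chi(\mathcal I(G)) \leq k-1$.

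Next, to produce a clique of size $k-1$, I would choose a chain in $L(G)$ of maximal length, say $\{e\} = H_0 \subsetneq H_1 \subsetneq \cdots \subsetneq H_k = G$, which exists by the definition of height. The intermediate terms $H_1, H_2, \ldots, H_{k-1}$ are proper non-trivial subgroups of $G$, hence vertices of $\mathcal I(G)$, and any two of them are comparable by strict inclusion, hence pairwise adjacent in $\mathcal I(G)$. This exhibits a clique of size $k-1$, so $\omega(\mathcal I(G)) \geq k-1$.

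Combining the two estimates with $\omega(\mathcal I(G)) \leq \chi(\mathcal I(G))$ yields the chain $k-1 \leq \omega(\mathcal I(G)) \leq \chi(\mathcal I(G)) \leq k-1$, forcing equality throughout. I do not anticipate any genuine obstacle, which is consistent with the author's remark that the corollary is immediate; the only bookkeeping point is to remember that neither endpoint $\{e\}$ nor $G$ of the longest chain counts as a vertex of $\mathcal I(G)$, so a chain of height $k$ supplies exactly $k-1$ (rather than $k+1$) clique vertices, matching the partition count in Theorem~\ref{inclusion graph 1}.
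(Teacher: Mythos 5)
Your proposal is correct and is exactly the argument the paper leaves implicit when it calls the corollary an immediate consequence of Theorem~\ref{inclusion graph 1}: the $(k-1)$-partition bounds $\chi$ from above, the $k-1$ intermediate terms of a longest chain in $L(G)$ give a clique bounding $\omega$ from below, and $\omega\leq\chi$ closes the sandwich. Your bookkeeping remark about discarding the endpoints $\{e\}$ and $G$ of the chain is the right normalization and matches the paper's convention that $L(\mathbb Z_{p^\alpha})$ has height $\alpha$ while $\mathcal I(\mathbb Z_{p^\alpha})\cong K_{\alpha-1}$.
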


The next two results are immediate consequences of Theorem~\ref{inclusion graph 1} and the definition of the subgroup lattice of a group.
\begin{thm}\label{inclusion graph 3}
	Let $G$ be a group. Then the following are equivalent.
	\begin{enumerate}[\normalfont (i)]
			
		\item $\mathcal I(G)$ is totally disconnected;	
			
			\item every proper subgroups of $G$ is of prime order;
		
		\item height of $L(G)$ is  2.		
			
	\end{enumerate} 
\end{thm}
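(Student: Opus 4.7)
The plan is to establish the two equivalences (ii)$\Leftrightarrow$(iii) and (i)$\Leftrightarrow$(iii), which together yield the full three-way equivalence. Both rest on Theorem~\ref{inclusion graph 1} together with the definition of the height of $L(G)$ as the length of the longest chain in $L(G)$.

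First I would handle (ii)$\Leftrightarrow$(iii) directly from the structure of $L(G)$. If the height equals $2$, the longest chain in $L(G)$ has the shape $\{e\}\subsetneq H\subsetneq G$, so no proper subgroup $H$ of $G$ can possess a nontrivial proper subgroup of its own, as that would supply a chain of length $3$. Consequently every such $H$ has only $\{e\}$ and $H$ as subgroups, forcing $|H|$ to be prime. Conversely, if every proper subgroup of $G$ is of prime order, then no chain in $L(G)$ can extend beyond three links, so the height is exactly $2$ (assuming, as we may, that $G$ has at least one nontrivial proper subgroup).

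Next I would deduce (i)$\Leftrightarrow$(iii) from Theorem~\ref{inclusion graph 1}. If the height equals $2$, that theorem gives that $\mathcal I(G)$ is $(2-1)=1$-partite; a $1$-partite graph has empty edge set by the definition recalled in Section~\ref{sec:int}, so $\mathcal I(G)$ is totally disconnected. Conversely, if $\mathcal I(G)$ is totally disconnected and has at least one vertex, then Corollary~\ref{inclusion graph c4} gives $\omega(\mathcal I(G))=k-1=1$, whence $k=2$.

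The proof contains no real obstacle, since the technical core is already packaged in Theorem~\ref{inclusion graph 1}; the remaining work is just unpacking definitions. The only mild subtlety is the degenerate case in which $G$ has no nontrivial proper subgroup (so $\mathcal I(G)$ is the empty graph), which must be addressed separately to reconcile the convention of ``proper subgroup'' used in (ii) with the one used in the definition of $\mathcal I(G)$.
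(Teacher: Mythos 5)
Your proposal is correct and follows essentially the same route as the paper, which gives no written proof but declares the theorem an immediate consequence of Theorem~\ref{inclusion graph 1} and the definition of the subgroup lattice; your unpacking of the $(k-1)$-partite statement for $(i)\Leftrightarrow(iii)$ and of the chain condition for $(ii)\Leftrightarrow(iii)$ is exactly the intended argument. Your remark about the degenerate case where $G$ has no nontrivial proper subgroup is a reasonable caveat that the paper leaves implicit.
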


\begin{thm}\label{inclusion graph 2}
	Let $G$ be a group and $e$ be its identity element. Then the following are equivalent.
	\begin{enumerate}[\normalfont (i)]
		%	\item $|G|$ is one of $p^3$, $pq$, $p^2q$ or $pqr$;
			\item $\mathcal I(G)$ is bipartite;
		\item $L(G)-\{G, e\}\cong \mathcal I(G)$;
	
		\item height of $L(G)$ is either 2 or 3.
	\end{enumerate} 
\end{thm}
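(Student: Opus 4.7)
The plan is to exploit Theorem~\ref{inclusion graph 1} together with Corollary~\ref{inclusion graph c4} for the equivalence \textup{(i)}$\Leftrightarrow$\textup{(iii)}, and then to argue directly via chain lengths in the subgroup lattice for \textup{(ii)}$\Leftrightarrow$\textup{(iii)}. Throughout I will interpret $L(G)-\{G,e\}$ as a graph on the proper nontrivial subgroups of $G$ in which two vertices are adjacent precisely when one \emph{covers} the other in $L(G)$; this is the only sensible reading that allows the isomorphism in \textup{(ii)} to hold.

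For \textup{(i)}$\Leftrightarrow$\textup{(iii)}: By Corollary~\ref{inclusion graph c4}, $\chi(\mathcal I(G))=k-1$, where $k$ is the height of $L(G)$. A graph is bipartite iff its chromatic number is at most $2$, so $\mathcal I(G)$ is bipartite iff $k-1\le 2$, i.e.\ $k\in\{2,3\}$ (the degenerate case $k\le 1$ corresponds to $G$ having no proper nontrivial subgroup and is implicitly excluded, as in Theorem~\ref{inclusion graph 3}). This is essentially a one-line deduction once the earlier theorem is invoked.

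For \textup{(iii)}$\Rightarrow$\textup{(ii)}: I would observe that if the height of $L(G)$ is at most $3$, then there is no chain $H_{1}\subsetneq H_{2}\subsetneq H_{3}$ of proper nontrivial subgroups, for prefixing such a chain by $\{e\}$ and suffixing by $G$ would produce a chain of length at least $4$ in $L(G)$. Consequently, every proper containment between two proper nontrivial subgroups of $G$ is already a covering relation, so adjacency in $\mathcal I(G)$ coincides with adjacency in $L(G)-\{G,e\}$, and the identity map on proper nontrivial subgroups furnishes the required graph isomorphism.

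For \textup{(ii)}$\Rightarrow$\textup{(iii)}: I would argue by contrapositive. Suppose the height of $L(G)$ is at least $4$, so there is a chain $e\subsetneq H_{1}\subsetneq H_{2}\subsetneq H_{3}\subsetneq G$. Then $H_{1}\subsetneq H_{3}$ yields an edge in $\mathcal I(G)$, but $H_{1}$ and $H_{3}$ are not adjacent in $L(G)-\{G,e\}$ because $H_{2}$ lies strictly between them, so they do not form a covering pair. Hence $\mathcal I(G)\not\cong L(G)-\{G,e\}$, contradicting \textup{(ii)}.

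The main obstacle is conceptual rather than computational: fixing the correct graph-theoretic reading of the symbol $L(G)-\{G,e\}$, after which the argument amounts to comparing ``covering'' and ``properly contained in'' on chains of length at most three versus length at least four.
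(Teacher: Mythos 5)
Your proof is correct in substance and follows the route the paper intends: the paper offers no written argument for this theorem, declaring it an immediate consequence of Theorem~\ref{inclusion graph 1} and the definition of the subgroup lattice, and your write-up supplies exactly the missing details --- the chromatic-number identity of Corollary~\ref{inclusion graph c4} for (i)$\Leftrightarrow$(iii), and the comparison of covering relations with containments for (ii)$\Leftrightarrow$(iii). Your reading of $L(G)-\{G,e\}$ as the Hasse diagram on the proper nontrivial subgroups is the only one under which statement (ii) is nontrivial and true, and making that explicit is a genuine improvement on the paper's presentation.

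One step needs a word more. In (ii)$\Rightarrow$(iii) you exhibit a pair $H_{1},H_{3}$ that is adjacent in $\mathcal I(G)$ but not in $L(G)-\{G,e\}$ and conclude that the two graphs are not isomorphic; as written this only shows that the \emph{identity} map fails to be an isomorphism, whereas (ii) asserts the existence of \emph{some} isomorphism. The gap closes in one line: the two graphs share the same vertex set, and every covering relation is a containment, so $L(G)-\{G,e\}$ is a spanning subgraph of $\mathcal I(G)$; when the height of $L(G)$ is at least $4$ the inclusion of edge sets is strict, hence the edge counts differ and no isomorphism can exist. With that remark inserted, your argument is complete.
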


\begin{cor}\label{inclusion graphs 3}
	Let $G$ be a finite group and $p$, $q$, $r$ be distinct primes. Then
	\begin{enumerate}[\normalfont (1)]
	\item $\mathcal I(G)$ is totally disconnected if and only if $G$ is 
	one of $\mathbb Z_{p^2}$, $\mathbb Z_p\times \mathbb Z_p$, $\mathbb Z_{pq}$, $\mathbb Z_q\rtimes \mathbb Z_p$;
	
	\item $\mathcal I(G)$ is bipartite if and only if $|G|$ is one of $p^2$, $pq$, $p^3$, $p^2q$ or $pqr$.
	\end{enumerate} 
\end{cor}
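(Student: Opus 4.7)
The plan is to translate Theorems~\ref{inclusion graph 3} and~\ref{inclusion graph 2} (which characterize total disconnectedness and bipartiteness of $\mathcal I(G)$ in terms of the height of $L(G)$) into explicit conditions on $|G|$, using Sylow theory, Hall's theorem, and standard classifications of small-order groups.

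For part (1), Theorem~\ref{inclusion graph 3} asserts that every proper nontrivial subgroup of $G$ has prime order. First I would note that for each prime $p$ dividing $|G|$, the Sylow $p$-subgroup either equals $G$ (only if $G$ is itself a $p$-group) or is a proper subgroup, hence of prime order $p$. Splitting into these two cases: in the former, the existence of subgroups of each order $p^i$ forces $|G|=p^2$, giving $G\cong\mathbb Z_{p^2}$ or $\mathbb Z_p\times\mathbb Z_p$; in the latter $|G|$ is squarefree, say $|G|=p_1\cdots p_k$. The case $k\geq 3$ is ruled out because groups of squarefree order are supersolvable, so Hall's theorem yields a proper subgroup of composite order $p_ip_j$, contradicting the hypothesis. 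Hence $k=2$, and the standard classification of groups of order $pq$ gives $\mathbb Z_{pq}$ and the nonabelian $\mathbb Z_q\rtimes\mathbb Z_p$. The converse is a direct verification on each of the four listed groups.

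For part (2), Theorem~\ref{inclusion graph 2} equates bipartiteness of $\mathcal I(G)$ with the height of $L(G)$ being $2$ or $3$. Along any chain $\{e\}=H_0\subsetneq H_1\subsetneq\cdots\subsetneq H_n=G$, multiplicativity of indices gives $n\leq\Omega(|G|)$, where $\Omega$ counts prime factors with multiplicity, so the height is at most $\Omega(|G|)$. For $|G|\in\{p^2,pq,p^3,p^2q,pqr\}$ we have $\Omega(|G|)\leq 3$, and such $G$ are all solvable (by the $p$-group structure, Burnside's $p^aq^b$-theorem, and the classical theorem that groups of order $pqr$ are solvable); refining a chief series then produces a chain of length exactly $\Omega(|G|)\leq 3$, so $\mathcal I(G)$ is bipartite.

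The principal obstacle is the converse: showing that $\Omega(|G|)\geq 4$ forces the height of $L(G)$ to be at least $4$. My approach is to exhibit a chain of length $4$ by a case split on the prime factorization of $|G|$. If some $p^3$ divides $|G|$, a Sylow $p$-subgroup provides a chain of length $3$ which extends by one step to $G$ (or one works entirely within $G$ when $G$ itself is a $p$-group of order $p^{\alpha}$ with $\alpha\geq 4$). Otherwise $|G|=\prod p_i^{a_i}$ with $a_i\leq 2$ and $\sum a_i\geq 4$: Hall's theorem covers the solvable sub-cases (squarefree orders, which are supersolvable, and orders $p^2q^2$ handled by Burnside), while the sporadic non-solvable sub-cases, the smallest being $A_5$ of order $2^2\cdot 3\cdot 5$, are dispatched by writing down an explicit length-$4$ chain such as $\{e\}\subset\langle(1\,2)(3\,4)\rangle\subset V_4\subset A_4\subset A_5$.
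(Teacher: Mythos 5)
Your overall strategy is the right one, and it is essentially the only reasonable reading of the paper, which offers no proof of this corollary beyond citing Theorems~\ref{inclusion graph 3} and~\ref{inclusion graph 2}. Part (1) is complete: the split into the $p$-group case and the squarefree case, with supersolvability of squarefree-order groups killing $k\geq 3$, is correct, and the order-$pq$ classification finishes it. Likewise the forward implication of part (2) is fine: groups of order $p^2$, $pq$, $p^3$, $p^2q$, $pqr$ are solvable, a refined chief series realizes a chain of length $\Omega(|G|)$, and multiplicativity of indices bounds every chain by $\Omega(|G|)$, so the height of $L(G)$ is exactly $\Omega(|G|)\in\{2,3\}$.

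The gap is in the converse of part (2), specifically in your treatment of non-solvable groups. You describe the non-solvable groups of cube-free order as ``sporadic'' and propose to dispatch them by exhibiting explicit length-$4$ chains, with $A_5$ as the model. But this class is infinite: $PSL(2,q)$ has cube-free order for infinitely many primes $q$ (e.g.\ $q=11,13,19,29,\dots$), and one can also form products such as $A_5\times\mathbb Z_7$. A chain-by-chain verification therefore cannot close this case; you need a uniform statement that every non-solvable finite group admits a subgroup chain of length at least $4$ (equivalently, three nested proper nontrivial subgroups). Two standard routes: (a) every non-solvable group has a subquotient isomorphic to a minimal simple group from Thompson's list, and each of these contains a proper subgroup ($A_4$, $(\mathbb Z_q)^p$ with $p>2$, a Sylow $3$-subgroup of $L_3(3)$, or a dihedral subgroup $D_{p\pm 1}$) that carries an internal chain of length $3$ --- this is exactly the device the paper itself deploys later in Proposition~\ref{inclusion graph t11}, and chains lift through subquotients; or (b) more elementarily, a non-solvable $G$ of cube-free order must have Sylow $2$-subgroup $V\cong\mathbb Z_2\times\mathbb Z_2$ (a cyclic Sylow $2$-subgroup yields a normal $2$-complement, hence solvability via Feit--Thompson), and Burnside's normal complement theorem together with Feit--Thompson forces $V<N_G(V)<G$, giving $1<\mathbb Z_2<V<N_G(V)<G$. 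A smaller blemish: your parenthetical list of solvable sub-cases (``squarefree and $p^2q^2$'') omits orders such as $p^2qr$, but this is harmless since your chief-series observation already handles all solvable groups uniformly.
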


Consider the semi-direct product $\mathbb Z_q \rtimes_{t} \mathbb Z_{p^{\alpha}} = \langle a,b~|~a^q= b^{p^{\alpha}}= 1, bab^{-1}= a^i,
{ord_{q}}(i)= p^t \rangle$, where $p$ and $q$ are distinct primes with $p^t~|~(q-1)$, $t \geq 0$. Then every semi-direct product $Z_q \rtimes Z_{p^{\alpha}}$
is  one of these types \cite[Lemma 2.12]{boh-reid}. Note that, here after we suppress the subscript when $t = 1$.

\begin{thm}\label{incl 121}
Let $G$ be a finite group and $p$, $q$, $r$ be distinct primes. Then 
\begin{enumerate}[\normalfont (1)]
	\item $\mathcal I(G)\cong C_n$ if and only if either $n=3$ and $G\cong \mathbb Z_{p^4}$ or $n=6$ and $G\cong \mathbb Z_{pqr}$;
	
\item $\mathcal I(G)$ is a tree if and only if $G$ is one of $\mathbb Z_{p^3}$, $\mathbb Z_{p^2q}$, $\mathbb Z_{p^2}\times \mathbb Z_p$, $Q_8$, 
$M_8$, $\mathbb Z_q\rtimes \mathbb Z_{p^2}$, $\mathbb Z_q\rtimes_2 \mathbb Z_{p^2}$ or $\mathbb Z_{p^2}\rtimes\mathbb Z_q=\langle a,b~|~a^{p^2}=b^q=1, bab^{-1}=a^i, \mbox{ord}_{p^2}(i)=q\rangle$ $(q~|(p-1))$;

\item $\mathcal I(G)$ is a star  if and only if $G$ is either $\mathbb Z_{p^3}$ or $Q_8$;
\item $\mathcal I(G)\cong P_n$ if and only if either $n=1$ and $G\cong\mathbb Z_{p^3}$ or $n=3$ and $G \cong \mathbb Z_{p^2q}$.

\end{enumerate}
\end{thm}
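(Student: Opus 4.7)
The plan is to treat all four parts by a common reduction to subgroup lattices of small height. Except for the case $C_3 = K_3$, each of the target graphs (cycles $C_n$ with $n \geq 4$, trees, stars, paths) is triangle-free, hence bipartite, so by Theorem~\ref{inclusion graph 2} the height of $L(G)$ is $2$ or $3$; since height $2$ gives a totally disconnected graph by Theorem~\ref{inclusion graph 3} (which is never a nontrivial tree, star, path, or cycle), I may assume the height is $3$, and Corollary~\ref{inclusion graphs 3}(2) then forces $|G| \in \{p^3, p^2q, pqr\}$, leaving a finite enumeration for each part.

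For part (1), if $n = 3$ then $C_3 = K_3$, so Theorem~\ref{inclusion graph 112} yields $G \cong \mathbb{Z}_{p^\alpha}$ with $\alpha - 1 = 3$, i.e., $G \cong \mathbb{Z}_{p^4}$. For $n \geq 4$, $\mathcal{I}(G)$ is $2$-regular, bipartite, and triangle-free, so each maximal subgroup $M$ of $G$ contains exactly two proper nontrivial subgroups, arranged as an antichain (else a chain would create a triangle with $M$). A short check shows this forces each such $M$ to be cyclic of biprime order, which rules out $|G| = p^3$ and $|G| = p^2q$ by looking at the Sylow subgroups that are maximal there. Thus $|G| = pqr$; the bipartition $\mathcal{M}_1, \mathcal{M}_2$ from Theorem~\ref{inclusion graph 1} has $|\mathcal{M}_2| = n/2$, and since $\mathcal{M}_2$ consists of the subgroups of prime order (i.e., the Sylow subgroups), this yields $n_p(G) = n_q(G) = n_r(G) = 1$. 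Hence $G \cong \mathbb{Z}_{pqr}$, and $\mathcal{I}(\mathbb{Z}_{pqr}) \cong C_6$ by direct computation of the six biprime and prime-order subgroups.

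For part (2), I will enumerate the finite groups of orders $p^3$, $p^2q$, and $pqr$, using the classification of such groups (including the semidirect products introduced just before the theorem). For each group I will compute $|V(\mathcal{I}(G))|$ and $|E(\mathcal{I}(G))|$ from the number of subgroups of each order together with their inclusion relations, and then apply the criterion that a connected graph is a tree if and only if $|E| = |V| - 1$. The groups of order $pqr$ are ruled out, since the cyclic one gives $C_6$ by part (1) and the non-cyclic ones have multiple Sylow subgroups, producing cycles. The elementary abelian $\mathbb{Z}_p^3$ and the mixed $\mathbb{Z}_p^2 \times \mathbb{Z}_q$ produce graphs too dense to be trees; the remaining groups listed in the statement are verified case by case to meet the tree condition.

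Parts (3) and (4) then reduce to inspecting each group in the list of part (2). A star $K_{1,m}$ has a unique vertex of degree greater than $1$, while a path $P_n$ ($n \geq 2$) has exactly two vertices of degree $1$; these constraints pick out $\mathcal{I}(\mathbb{Z}_{p^3}) = K_2 = K_{1,1} = P_1$ (both a star and a path), $\mathcal{I}(Q_8) = K_{1,3}$ (star), and $\mathcal{I}(\mathbb{Z}_{p^2q}) = P_3$ (path), while all other trees on the list (being double stars or larger spiders) fail both tests. The main obstacle throughout is the subgroup bookkeeping for the non-abelian groups of order $p^2q$, where one must use the Sylow theorems together with the specific presentation of each semidirect product to count subgroups and their inclusions precisely; once these numbers are in hand, the classification follows by direct comparison of vertex and edge counts with the desired graph invariants.
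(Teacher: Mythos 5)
Your overall strategy for parts (2)--(4) is essentially the paper's: use bipartiteness to reduce to groups of order $p^2$, $pq$, $p^3$, $p^2q$, $pqr$ via Corollary~\ref{inclusion graphs 3}(2), then enumerate the groups of each order and inspect their inclusion graphs. Your tree test $|E|=|V|-1$ (plus connectedness) in place of drawing the graphs is a cosmetic difference, and your degree-sequence criteria for stars and paths match what the paper does implicitly.

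The genuine gap is in part (1), in the step you dispose of in half a sentence: ``triangle-free, hence bipartite.'' That implication is false for general graphs --- $C_5$ and $C_7$ are triangle-free and not bipartite --- so your reduction to height $\leq 3$ does not apply when $\mathcal I(G)\cong C_n$ with $n$ odd, $n\geq 5$, and your later assertion that $\mathcal I(G)$ is ``$2$-regular, bipartite, and triangle-free'' for all $n\geq 4$ simply assumes away exactly the case that needs an argument. This is where the paper invests most of its effort on part (1): it runs a careful alternation argument around the cycle (orienting each edge by inclusion and showing the orientations must alternate to avoid a $C_3$, which is impossible when $n$ is odd) to exclude odd cycles of length at least $5$. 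Your gap is repairable without that argument, but only by invoking a fact special to inclusion graphs rather than a general graph-theoretic one: by Corollary~\ref{inclusion graph c4}, $\omega(\mathcal I(G))=\chi(\mathcal I(G))$ for every finite group $G$, whereas $\omega(C_n)=2\neq 3=\chi(C_n)$ for odd $n\geq 5$, so no inclusion graph is an odd cycle of length $\geq 5$; equivalently, triangle-freeness forces $\omega\leq 2$, hence height $\leq 3$, hence bipartiteness by Theorem~\ref{inclusion graph 2}. As written, though, your proof of part (1) does not rule out $C_5, C_7,\ldots$, and the remaining sketchy points (the ``short check'' that each maximal subgroup is cyclic of biprime order, and the Sylow count giving $n_p=n_q=n_r=1$) are plausible but would need to be carried out in full.
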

\begin{proof}
	
	First we claim that  $\mathcal I(G)\cong C_n$, where $n$ is odd if and only if $n=3$ and $G\cong \mathbb Z_{p^4}$.
	
	It is easy to see that  $G\cong \mathbb Z_{p^4}$ if and only if $\mathcal I(G)\cong C_3$. Now suppose that $\mathcal I(G)\cong C_n$, where $n$ is odd, $n \geq 5$.  Let this cycle be  $H_1-H_2-\cdots-H_n-H_1$. Now we have two possibilities: $H_1\subset H_2$, $H_n$ or $H_2$, $H_n\subset H_1$. 
	
\begin{enumerate}[\normalfont (i)]
	\item  Suppose that $H_1\subset H_2$, $H_n$. Since $H_2$ is adjacent to $H_3$, so either $H_2\subset H_3$ or $H_3\subset H_2$. If $H_2\subset H_3$, then $H_1, H_2, H_3$ forms $C_3$ as a subgraph of $\mathcal I(G)$, which is a contradiction and so we must have $H_3\subset H_2$. Next, since $H_3$ is adjacent to $H_4$, so either $H_3\subset H_4$ or $H_4\subset H_3$. If $H_4\subset H_3$, then $H_2,H_3,H_4$ forms $C_3$ as a subgraph in $\mathcal I(G)$, which is a contradiction and so we must have $H_3\subset H_4$. If we proceed like this, we get $H_i\subset H_{i-1}$, $H_{i+1}$, when $i$  is an  odd integer (suffixes taken modulo $n$) and so  $H_n\subset H_1$, which is not possible.
	
	\item  Suppose that $H_2$, $H_n\subset H_1$. 
Since $H_2$ is adjacent to $H_3$, so either $H_2\subset H_3$ or $H_3\subset H_2$. If $H_3\subset H_2$, then $H_1,H_2,H_3$ forms $C_3$ as a subgraph of $\mathcal I(G)$, which is a contradiction and so we must have and so we have $H_2\subset H_3$. Next, since $H_3$ is adjacent to $H_4$, so either $H_3\subset H_4$ or $H_4\subset H_3$. If $H_3\subset H_4$, then $H_2,H_3,H_4$ forms $C_3$ as a subgraph of $\mathcal I(G)$ and so we have  $H_4\subset H_3$. If we proceed like this, we get $H_i\subset H_{i-1}$, $H_{i+1}$, when $i$  is an  even integer (suffixes taken modulo $n$) and so we have $H_{n-1} \subset H_n$. This implies that $H_{n-1},H_n,H_1$ forms $C_3$ as a subgraph  of $\mathcal I(G)$, which is a contradiction.
\end{enumerate} 
Thus $\mathcal I(G)\ncong C_n$, where $n$ is odd, $n \geq 5$ and our claim is now proved.

Next, we start to prove the main theorem. Since every tree, star graph, path, even cycle are bipartite, so to classify the finite groups whose inclusion graph of subgroups is one of tree, star graph, path, even cycle, it is enough to consider the groups of order $p^2$, $pq$, $p^3$, $p^2q$, $pqr$, by Corollary~\ref{inclusion graphs 3}(2).

\noindent\textbf{Case 1:} Let $|G|=p^2$ or $pq$. By Theorem~\ref{inclusion graph 3}, $\mathcal I(G)$ is neither a tree nor a cycle.

\noindent\textbf{Case 2:} Let $|G|=p^3$. 
 Here we use the classification of groups of order $p^3$.
 \begin{enumerate}[\normalfont (i)]
\item If $G \cong \mathbb Z_{p^3}$, then by Theorem~\ref{inclusion graph 112}, $\mathcal I(G)\cong K_2$, which is a path but not a cycle.
 
\item If $G\cong \mathbb Z_{p^2}\times \mathbb Z_p$, then 
$\langle (1,0)\rangle$, $\langle (1,1)\rangle$,  $\ldots$, $\langle (1,p-1)\rangle$, $\langle (p,0), (0,1)\rangle$, 
$\langle (p,0)\rangle$, $\langle (p,1)\rangle$, $\ldots$, $\langle (p,p-1)\rangle$, $\langle (0,1)\rangle$ are the only proper 
subgroups of $G$. $\mathcal I(G)$ is shown in Figure~\ref{incl fig}(a), which is a tree, but none of a star, path, cycle.
%\begin{figure}[ ht ]
%\begin{center} 
% \includegraphics[scale=.7]{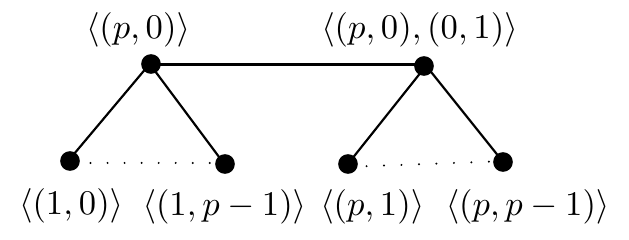}
%\caption{$\mathcal I(\mathbb Z_{p^2}\times \mathbb Z_p)$}
%\label{incl fig1}
%\end{center}
%\end{figure}

%Here $A_1$ is a subgroup of $H_i$, $i=1,2,\ldots, p+1$; for each $i=1,2,\ldots,p+1$, $A_i$ is a subgroup of $H_{p+1}$. Also no two 
%remaining subgroups are one is contained in another. It follows that 
%\begin{equation}\label{e1}
%\mathcal I(G)\cong B_{p,p}. 
%\end{equation}

\item If $G\cong \mathbb Z_p\times \mathbb Z_p\times \mathbb Z_p:=\langle a, b, c~|~ a^p=b^p=c^p=1, ab=bc, ac=ca, bc=cb\rangle$, then $\mathcal I(G)$ contains $C_6$ as a subgraph: $\langle a,b\rangle-\langle b\rangle-\langle b,c\rangle-\langle c\rangle-\langle a,c\rangle-\langle a\rangle-\langle a,b\rangle$. So $\mathcal I(G)$ is neither a tree nor a cycle.

\item If $G\cong Q_8=\langle a, b~|~a^4=b^4=1, a^2=b^2, ab=ba^{-1}\rangle$, then $\langle a\rangle$, $\langle b\rangle$, $\langle ab\rangle$, $\langle a^2\rangle$ are the only 
proper subgroups of $G$.
It follows that 
\begin{equation}\label{incle2}
\mathcal I(Q_8)\cong K_{1,3}, 
\end{equation}
which is a star; but neither a path nor a cycle. 

\item If $G\cong M_8=\langle a, b~|~a^4=b^2=1, ab=ba^{-1}\rangle$, then $\langle a\rangle$, 
$\langle a^2,b\rangle$, $\langle a^2,ab\rangle$, 
$\langle b\rangle$, $\langle a^2\rangle$, $\langle ab\rangle$, $\langle a^2b\rangle$, $\langle a^3b\rangle$ 
are the only proper subgroups of $G$. It follows that $\mathcal I(G)$ is as shown in Figure~\ref{incl fig}(b), 
%\begin{figure}[ ht ]
%\begin{center} 
% \includegraphics[scale=.7]{iM_8.pdf}
%\caption{$\mathcal I(M_8)$}
%\label{incl fig2}
%\end{center}
%\end{figure}
which is a tree; but none of a star, path, cycle.

\item If $G\cong M_{p^3}$, $p > 2$,  then subgroup lattice of $M_{p^3}$ and $\mathbb Z_{p^2}\times \mathbb Z_p$ are isomorphic. It follows from Theorem~\ref{incl l2} that $\mathcal I(G)$ is as in~Figure~\ref{incl fig}(a), which is a tree, but none of a star, path, cycle. 

\item If $G\cong (\mathbb Z_p\times \mathbb Z_p)\rtimes \mathbb Z_p:=\langle a, b, c~|~a^p=b^p=c^p=1, ab=ba, ac=ca cbc^{-1}=ab\rangle$, then $\mathcal I(G)$ contains $C_6$ as a subgraph: $\langle a,b\rangle-\langle b\rangle-\langle b,c\rangle-\langle c\rangle-\langle a,c\rangle-\langle a\rangle-\langle a,b\rangle$. So $\mathcal I(G)$ is neither a tree nor cycle. 
\end{enumerate}

\noindent\textbf{Case 3:} Let $|G|=p^2q$.  Here we use the classification of groups of order $p^2q$ given in~\cite[pp. 76–-80]{burn}.

\noindent\textbf{Subcase 3a:} Let  $G$ be abelian.
\begin{enumerate}[\normalfont (i)]
	\item If  $G\cong \mathbb Z_{p^2q}$, then it is easy to see that
	%let $H_i$, $i=1$, 2, 3, 4 be the subgroups of $G$ of orders $p$, $p^2$, $q$, $pq$ respectively. 
	%Here $H_1$ is a subgroup of $H_2$, $H_4$; $H_3$ is a subgroup of $H_4$. It follows that 
	\begin{equation}\label{e4}
	\mathcal I(Z_{p^2q})\cong P_3, 
	\end{equation}
	which is a path; but neither a star nor a cycle.
	
	\item If $G\cong \mathbb Z_{pq}\times \mathbb Z_p$, then  $H:=\mathbb Z_p\times \mathbb Z_p$, $A:=\mathbb Z_q\times \{e\}$, 
	$H_x:=\langle (1,x)\rangle$ $(x=0,1,\ldots,p-1)$, $H_p=\langle (0,1)\rangle$, $A_x:=H_xA$, are the only subgroups of $G$. It follows that $\mathcal I(G)$ is as shown in Figure~\ref{incl fig}(c), 
	%\begin{figure}[ ht ]
	%\begin{center} 
	% \includegraphics[scale=.7]{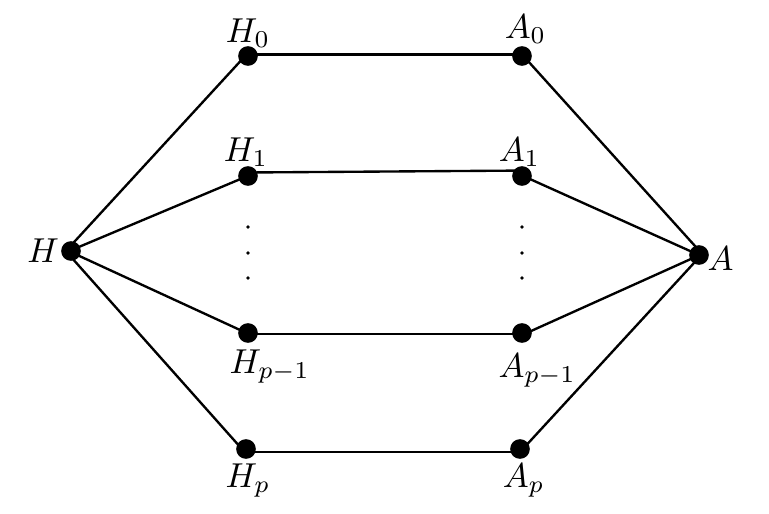}
	%\caption{$\mathcal I(\mathbb Z_{pq}\times \mathbb Z_p)$}
	%\label{incl fig3}
	%\end{center}
	%\end{figure}
	which is neither a tree nor a cycle.
	
\end{enumerate}

\noindent\textbf{Subcase 3b:} Let $G$ be non-abelian.
%Now we consider the non-abelian groups of order $p^2q$.

\noindent \textbf{Subcase 3b(I):} Let $p<q$.
\begin{enumerate}[\normalfont (i)]
	\item If $p~\nmid (q-1)$, by Sylow's Theorem, there are no non-abelian groups.
	
	\item If $p~|~(q-1)$ but $p^2~\nmid~(q-1)$, then we have two groups. The first one is 
	$G_1\cong \mathbb Z_q\rtimes \mathbb Z_{p^2}=\langle a,b~|~a^q=b^{p^2}=1, bab^{-1}=a^i, \mbox{ord}_q(i)=p\rangle$. Here $\langle a,b\rangle$, 
	$\langle a^ib\rangle$ $(i=1,2,\ldots,q)$, $\langle ab^p\rangle$, $\langle b^p\rangle$ are the only proper subgroups of $G_1$. It follows that $\mathcal I(G_1)$ is  as shown in Figure~\ref{incl fig}(d), 
%	\begin{figure}[ ht ]
%	\begin{center} 
%	 \includegraphics[scale=.9]{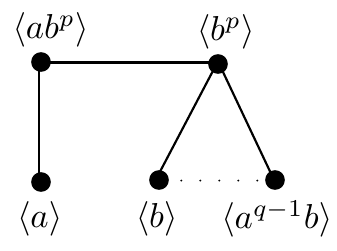}
%	\caption{$\mathcal I(\mathbb Z_q\rtimes \mathbb Z_{p^2})$}
%	\label{incl fig4}
%	\end{center}
%	\end{figure}
	which is a tree; but none of a star, path, cycle.
	
	The second one is $G_2=\langle a,b,c~|~a^q=b^p=c^p=1, bab^{-1}=a^i, ac=ca, bc=cb, \mbox{ord}_q(i)=p\rangle$. Then
	$\mathcal I(G)$ contains $C_6$ as a proper subgraph: $\langle a,b\rangle-\langle b\rangle-\langle b,c\rangle-\langle c\rangle-\langle a,c\rangle-\langle a\rangle-\langle a,b\rangle$ and so $\mathcal I(G_2)$ is neither a tree nor a cycle.

	\item If $p^2~|~(q-1)$, then we have both groups $G_1$ and $G_2$ mentioned in (ii) of this subcase, together with the group 
	$G_3=\mathbb Z_q\rtimes_2 \mathbb Z_{p^2}=\langle a,b~|~a^q=b^{p^2}=1, bab^{-1}=a^i, \mbox{ord}_q(i)=p^2\rangle$. In (ii) of this subcase, we already dealt with $G_1$, $G_2$. 
	By Sylow's Theorem, $G_3$ has a unique subgroup, say $H$ of order $q$, and has $q$ Sylow-$p$ subgroups of order $p^2$, say $H_i$, $i=1,2,\ldots,q$ ; 
	for each $i=1,2,\ldots,q$, $H_i$ has a unique subgroup $H_i'$ of order $p$ and $G_3$ has a unique subgroup, say $K$ of order $pq$. Here $H$ is a subgroup of 
	$K$; for each $i=1,2,\ldots,q$, $H_i'$ is a subgroup of $H_i$, $K$; for each $i=1,2,\ldots,q$, $H_i$ is a subgroup of $K$; for each $i=1,2,\ldots,q$, 
	no two $H_i$ contained in $H_j$ and $H_i$' contained in $H_j'$, for every $i\neq j$. It follows that $\mathcal I(G_3)$ is shown in Figure~\ref{incl fig}(e), 
%	\begin{figure}[ ht ]
%	\begin{center} 
%	 \includegraphics[scale=.7]{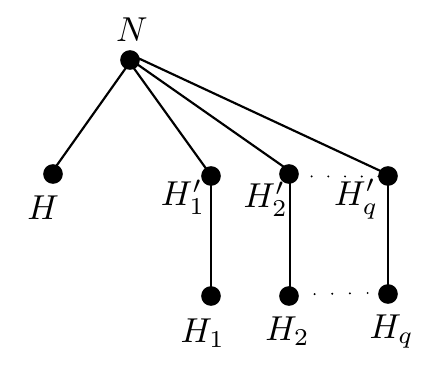}
%	\caption{$\mathcal I(\mathbb Z_2\rtimes_2 \mathbb Z_{p^2})$}
%	\label{incl fig5}
%	\end{center}
%	\end{figure}
	which is neither a tree nor a cycle.  
	
\end{enumerate}
%\noindent Subcase 1a: 
%
%\noindent Subcase 1b: 
%Here $H$ is 
%a subgroup of $K$; $H'$ is a subgroup of $H_i$, $H$, for every $i=1,2,\ldots,q$; no two $H_i$ contained in $H_j$, for every $i$, $j=1,2,\ldots,q$, $i\neq j$; 
%and it is not contained in $K$ also. It follows that 
%\begin{equation}\label{e5}
% \mathcal I(G_1)\cong B_{1,q}.
%\end{equation}

%$\langle ab\rangle$, $\langle bc\rangle$, $\langle ac\rangle$, $\langle ab,bc\rangle$, $\langle bc,ac\rangle$, 
%$\langle ab,ac\rangle$ are subgroups of $G$. One can easy to see that they form $C_6$ as a subgraph of $\mathcal I(G_2)$. 
%Therefore, $\mathcal I(G_2)$ 
%contains more than one cycle. So proof follows.

%\noindent Subcase 1c:
 
\noindent \textbf{Subcase 3b(II):} Let $p>q$.

\begin{enumerate}[\normalfont (i)]
\item If $q~\nmid (p^2-1)$, then there is no non-abelian group in this case.

\item If $q~|~(p-1)$, then we have two groups. The first group is 
$G_4\cong \mathbb Z_{p^2}\rtimes\mathbb Z_q=\langle a,b~|~a^{p^2}=b^q=1, bab^{-1}=a^i, \mbox{ord}_{p^2}(i)=q\rangle$. By Sylow's Theorem, $G_4$ has a unique 
subgroup $H$ of order $p^2$; $H$ has a unique subgroup $H'$ of order $p$ and $p^2$ Sylow $q$-subgroups of order $q$, say $H_1$, $H_2$,$\ldots$, $H_{p^2}$  and 
$p$ subgroups of order $pq$, say $N_1$, $N_2$, $\ldots$, $N_p$; these are the only proper subgroups of $G_4$. 
%Here $H'$ is a subgroup of $H$, $K_i$, for every 
%$i=1,2,\ldots,p$; for each $i=1,2,\ldots,p$, $H_{i+p}$ is a subgroup of $K_i$; no two remaining subgroups are one is contained in another. 
It follows that $\mathcal I(G_4)$ is as shown in Figure~\ref{incl fig}(f), 
%\begin{figure}[ ht ]
%\begin{center} 
% \includegraphics[scale=.7]{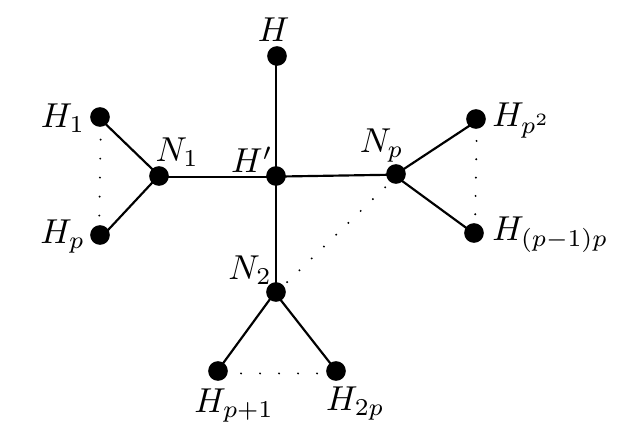}
%\caption{$\mathcal I(\mathbb Z_{p^2}\rtimes \mathbb Z_q)$}
%\label{incl fig6}
%\end{center}
%\end{figure}
which is a tree but none of a star, path, cycle.

\begin{figure}
    \centering
    \begin{subfigure}[b]{0.3\textwidth}
        \includegraphics[width=\textwidth]{incl1.pdf}
        \caption{}
        \label{incl fig1}
    \end{subfigure}
            \begin{subfigure}[b]{0.3\textwidth}
        \includegraphics[width=\textwidth]{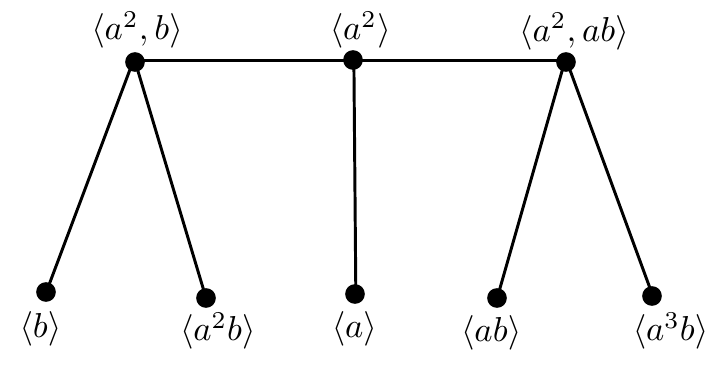}
        \caption{}
        \label{incl fig2}
    \end{subfigure}
       \begin{subfigure}[b]{0.3\textwidth}
        \includegraphics[width=\textwidth]{incl12.pdf}
        \caption{}
        \label{incl fig3}
    \end{subfigure}
    
    \begin{subfigure}[b]{0.18\textwidth}
            \includegraphics[width=\textwidth]{incl2.pdf}
            \caption{}
            \label{incl fig4}
        \end{subfigure}
        \begin{subfigure}[b]{0.2\textwidth}
                \includegraphics[width=\textwidth]{incl3.pdf}
                \caption{}
                \label{incl fig5}
            \end{subfigure}
            \begin{subfigure}[b]{0.3\textwidth}
                    \includegraphics[width=\textwidth]{incl4.pdf}
                    \caption{}
                    \label{incl fig6}
                \end{subfigure}
    \caption{(a) $\mathcal I(\mathbb Z_{p^2}\times \mathbb Z_p)$, (b) $\mathcal I(M_8)$, (c) $\mathcal I(\mathbb Z_{pq}\times \mathbb Z_p)$, (d) $\mathcal I(\mathbb Z_q\rtimes \mathbb Z_{p^2})$, (e) $\mathcal I(\mathbb Z_q\rtimes_2 \mathbb Z_{p^2})$, (f) $\mathcal I(\mathbb Z_{p^2}\rtimes \mathbb Z_q)$}\label{incl fig}
\end{figure}

%\begin{figure}[ ht ]
%\begin{center}
%\begin{minipage}{.3\textwidth}
% \begin{center}
% \includegraphics[scale =0.7]{incl1.pdf}
% \caption{$\mathcal I(\mathbb Z_{p^2}\times \mathbb Z_p)$}
% 	\label{incl fig1}
% \end{center}
% \end{minipage}
% \begin{minipage}{.3\textwidth}
% \begin{center}
% \includegraphics[scale =0.7]{iM_8.pdf}
% \caption{$\mathcal I(M_8)$}
% \label{incl fig2}
% \end{center}
% \end{minipage}
% \begin{minipage}{.3\textwidth}
%  \begin{center}
%  \includegraphics[scale =0.6]{incl12.pdf}
%  \caption{$\mathcal I(\mathbb Z_{pq}\times \mathbb Z_p)$}
%  \label{incl fig3}
%  \end{center}
%  \end{minipage}\\
%  \begin{minipage}{.3\textwidth}
%   \begin{center}
%   \includegraphics[scale =0.9]{incl2.pdf}
%   \caption{$\mathcal I(\mathbb Z_q\rtimes \mathbb Z_{p^2})$}
%   \label{incl fig4}
%   \end{center}
%   \end{minipage}
%   \begin{minipage}{.3\textwidth}
%      \begin{center}
%      \includegraphics[scale =0.8]{incl3.pdf}
%      \caption{$\mathcal I(\mathbb Z_q\rtimes_2 \mathbb Z_{p^2}$}
%      \label{incl fig5}
%      \end{center}
%      \end{minipage}
%      \begin{minipage}{.3\textwidth}
%            \begin{center}
%            \includegraphics[scale =0.7]{incl4.pdf}
%            \caption{$\mathcal I(\mathbb Z_{p^2}\rtimes \mathbb Z_q$}
%            \label{incl fig6}
%            \end{center}
%            \end{minipage}
%\end{center}
%\end{figure}

Second, we have a family of groups $\langle a,b,c~|~a^p=b^p=c^q=1, cac^{-1}=a^i, cbc^{-1}=b^{i^t}, ab=ba, \mbox{ord}_p(i)=q\rangle$. As $q>2$, there are $(q+3)/2$ 
isomorphism types in this family (one for $t=0$ and one for each pair $\{x,x^{-1}\}$ in $F_p^x$). We will refer to all these groups as $G_{5(t)}$ of order 
$p^2q$. Then
$\mathcal I(G_{(5t)})$ contains $C_6$ as a subgraph: $\langle a,b\rangle-\langle b\rangle-\langle b,c\rangle-\langle c\rangle-\langle a,c\rangle-\langle a\rangle-\langle a,b\rangle$. So $\mathcal I(G_{5(t)})$ is neither a tree nor a cycle. 

\item If $q~|~(p+1)$, then we have only one group of order $p^2q$, given by $G_6\cong (\mathbb Z_{p}\times \mathbb Z_p)\rtimes \mathbb Z_q=\langle a,b,c
~|~a^p=b^p=c^q=1, ab=ba, cac^{-1}=a^ib^j, cbc^{-1}=a^kb^l\rangle$, where $\bigl(\begin{smallmatrix}
i & j\\ k & l
\end{smallmatrix} \bigr)$ has order $q$ in $GL_2(p)$. 
Then $G_6$ has unique subgroup of order $p^2$, let it be $H$; $p+1$ subgroups of order $p$, let them be $H_i$, $i=1$, 2, $\ldots$, $p$; $p^2$ subgroups of order $q$; these are the only proper subgroups of $G_6$. It follows that
\begin{equation}\label{e17}
	\mathcal I(G_6)\cong K_{1, p+1}\cup \overline{K}_{p^2}, 
\end{equation}

 which is neither a tree nor a cycle.
%Here $\mathbb Z_p\times \mathbb Z_p$ 
%is a subgroup of $G_6$. So $\mathbb Z_p\times \mathbb Z_p$ together with its proper subgroups contains $K_{1,3}$ as a subgraph. Let
%$H_1:=\langle ab\rangle$, $H_2:=\langle bc\rangle$, $H_3:=\langle ac\rangle$, $H_4:=\langle ab,bc\rangle$, $H_5:=\langle bc,ac\rangle$, 
%$H_6:=\langle ab,ac\rangle$ are subgroups of $G_6$. One can easy to see that they form $C_6$ as a subgraph of $\mathcal I(G_6)$. Therefore, $\mathcal I(G_6)$ 
%contains more than one cycle.
\end{enumerate}
Note that if $(p,q)=(2,3)$, then subcases 3b(I) and 3b(II) are not mutually exclusive. Up to isomorphism, there are three non-abelian groups of order 12: 
$\mathbb Z_3\rtimes \mathbb Z_4$, $D_{12}$ and $A_4$. We have already dealt with $\mathbb Z_3\rtimes \mathbb Z_4$, $D_{12}$ in (ii) of Subcase 3b(I). But for $A_4$, we 
cannot use the same argument as in (iii) of Subcase 3b(II). Now  $A_4$ has unique subgroup of order 4, say $H$ ; three subgroups of order 
2, say $H_1$, $H_2$, $H_3$  and four subgroups  of order 3, say $N_1$, $N_2$, $N_3$, $N_4$. Here for each $i=1,2,3$, $H_i$ is a subgroup of $H$; for each $i=1,2,3,4$, $N_i$ is not 
contained in any other subgroups of $A_4$; no two remaining subgroups are adjacent in $\mathcal I(A_4)$. Therefore, 
\begin{equation}\label{e8}
\mathcal I(A_4)\cong K_{1,3}\cup \overline{K}_4,
\end{equation}
which is neither a tree nor a  cycle.

\noindent\textbf{Case 4:} Let $|G|=pqr$. 

If $G\cong \mathbb Z_{pqr}$, then let $H_1$, $H_2$, $H_3$, $H_4$, $H_5$, $H_6$ be the   subgroups of $G$ of 
orders $p$, $q$, $r$, $pq$, $pr$, $qr$, respectively. Here $H_1$ is a subgroup of $H_4$, $H_5$; $H_2$ is a subgroup of $H_4$, $H_6$; $H_3$ 
is a subgroup of $H_5$, $H_6$. So it turns out that $\mathcal I(G)\cong C_6$, which is not a tree.

If $G$ is non-abelian, then $G$ has a Sylow basis containing Sylow $p$, $q$, $r$-subgroups, let them be $H_1$, $H_2$, $H_3$, respectively and so $H_1H_2$, $H_2H_3$, $H_1H_3$ are proper subgroups of $G$. It follows that $\mathcal I(G)$ contains $C_6$ as a proper subgraph: $H_1-H_1H_2-H_2-H_2H_3-H_3-H_1H_3-H_1$. So $\mathcal I(G)$ neither a tree nor a cycle.

Proof follows by combining all the above cases together.
\end{proof}

In the next result, we characterize some groups by using their inclusion graph of subgroups.
 
\begin{cor}\label{inclusion graphs 15} 
Let $G$ be a group.
	\
	\begin{enumerate}[{\normalfont (1)}]
		%\item If $\mathcal I(G)\cong\mathcal I(\mathbb Z_{p^2}\times \mathbb Z_p)$, then $G\cong \mathbb Z_{p^2}\times \mathbb Z_p$ or $M_{p^3}$, $p\neq 2$.
		\item If $\mathcal I(G)\cong\mathcal I(Q_8)$, then $G\cong Q_8$;
		\item If $\mathcal I(G)\cong\mathcal I(M_8)$, then $G\cong M_8$.
		\item If $\mathcal I(G)\cong\mathcal I(\mathbb Z_{9}\rtimes \mathbb Z_2)$, then $G\cong \mathbb Z_{9}\rtimes \mathbb Z_2$
	\end{enumerate}
\end{cor}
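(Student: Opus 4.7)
The plan for all three parts is the same: use Theorem~\ref{incl 121} to reduce $G$ to a short list of candidates determined by the gross graph-theoretic shape of $\mathcal I(G)$, and then separate the survivors using the number of vertices together with the degree sequence. For part~(1), equation~\eqref{incle2} gives $\mathcal I(Q_8)\cong K_{1,3}$, a star on four vertices, so Theorem~\ref{incl 121}(3) forces $G$ to be either $\mathbb Z_{p^3}$ or $Q_8$; by Theorem~\ref{inclusion graph 112}, $\mathcal I(\mathbb Z_{p^3})\cong K_2$ has only two vertices and is ruled out, leaving $G\cong Q_8$.

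For part~(2), the analysis of $M_8$ carried out inside the proof of Theorem~\ref{incl 121} shows that $\mathcal I(M_8)$ is a tree on eight vertices whose three non-leaf vertices (the subgroups $\langle a^2\rangle$, $\langle a^2,b\rangle$ and $\langle a^2,ab\rangle$) all have degree~$3$. If $\mathcal I(G)\cong \mathcal I(M_8)$ then $\mathcal I(G)$ is a tree on eight vertices, so by Theorem~\ref{incl 121}(2) (together with Theorem~\ref{incl l2}, which identifies $\mathcal I(M_{p^3})$ with $\mathcal I(\mathbb Z_{p^2}\times\mathbb Z_p)$ for $p>2$) the group $G$ must lie in one of the families on that list. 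A direct count of proper non-trivial subgroups in each family leaves exactly three candidates with eight vertices: $M_8$ itself, $\mathbb Z_9\times\mathbb Z_3$ (from $2p+2=8$), and $\mathbb Z_5\rtimes\mathbb Z_4$ (from $q+3=8$). Their degree sequences, read off directly from the subgroup structures already computed in the proof of Theorem~\ref{incl 121}, are $(3,3,3,1,1,1,1,1)$, $(4,4,1,1,1,1,1,1)$, and $(6,2,1,1,1,1,1,1)$ respectively; as these are pairwise distinct, only $M_8$ matches and $G\cong M_8$.

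Part~(3) is handled along the same lines. The analysis of $\mathbb Z_9\rtimes\mathbb Z_2$ inside the proof of Theorem~\ref{incl 121} shows that $\mathcal I(\mathbb Z_9\rtimes\mathbb Z_2)$ is a tree on $p^2+p+2=14$ vertices in which four vertices (the unique subgroup of order $3$ and the three subgroups of order $6$) have degree $4$ and the remaining ten are leaves. Matching vertex counts against the list in Theorem~\ref{incl 121}(2) leaves only $\mathbb Z_q\rtimes\mathbb Z_{p^2}$ with $q+3=14$ (i.e.\ $q=11$) and $\mathbb Z_9\rtimes\mathbb Z_2$ as candidates. In any group of the first type the unique subgroup of order~$p$ sits inside all eleven Sylow $p$-subgroups of order $p^2$ and inside the unique subgroup of order $11p$, giving a vertex of degree $12$, whereas the maximum degree in $\mathcal I(\mathbb Z_9\rtimes\mathbb Z_2)$ is $4$. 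So $G\cong\mathbb Z_9\rtimes\mathbb Z_2$.

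The only real obstacle is bookkeeping: one has to enumerate the proper non-trivial subgroups of every family listed in Theorem~\ref{incl 121}(2), confirm that no other family yields an $8$- or $14$-vertex tree, and then check the few degree sequences needed to separate the remaining candidates. There is no deeper conceptual difficulty.
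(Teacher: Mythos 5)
Your proposal is correct and follows essentially the same route as the paper: the paper also reduces each case to the classification in Theorem~\ref{incl 121} (stars resp.\ trees) and then asserts that the specific graphs $K_{1,3}$, Figure~\ref{incl fig}(b) and Figure~\ref{incl fig}(f) single out $Q_8$, $M_8$ and $\mathbb Z_9\rtimes\mathbb Z_2$ among the groups on that list. You merely carry out explicitly (via vertex counts and degree sequences) the uniqueness check that the paper leaves to the figures, and your counts are accurate.
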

\begin{proof}
	By Theorem~\ref{incl 121}(2), $\mathcal I(Q_8)$, $\mathcal I(M_8)$, $\mathcal I(\mathbb Z_{9}\rtimes \mathbb Z_2)$ are trees and by \eqref{incle2}, Figures~\ref{incl fig}(b) and \ref{incl fig}(f), these trees uniquely determines the corresponding group. Hence the proof.
\end{proof}

%\begin{thm}\label{inclusion graphs 9}
%  Let $G$ be a group. Then $\mathcal I(G)$ contains $K_5$ as a subgraph if and only if $G$ has a chain of subgroups of length 4.
% \end{thm}
% \begin{proof}
%   Suppose $\mathcal I(G)$ contains $K_5$ as a subgraph. Then $G$ has five proper subgroups say, $H_1$, $H_2$, $H_3$, $H_4$, $H_5$. Since $H_1$, $H_2$, $H_3$ 
% forms $C_3$ as a subgraph. So by Theorem~\ref{inclusion graphs 5}, $H_1$, $H_2$, $H_3$ forms a chain, without loss of generality, we assume that 
% $H_1\supseteq H_2 \supseteq H_3$. Now $H_4$ adjacent with $H_i$, $i=1,2,3$. If $H_4$ is a subgroup of $H_3$, then we have a chain of subgroups of order 3. 
% If $H_4$ is a subgroup of $H_2$, then $H_4$ is a subgroup of $H_1$, but $H_4$ adjacent with $H_3$ also. So either $H_4$ is a subgroup of $H_3$ or $H_3$ is a 
% subgroup of $H_4$. In either cases, we have a chain of subgroups of length 3. Now we can use the same argument as in above for $H_5$.
% The converse part is straight forward.
% \end{proof}

\begin{thm}\label{inclusion graphs 41}
Let $G$ be a group. Then $\mathcal I(G)$ is connected if and only if $\mathscr{I}(G)$ is connected.
\end{thm}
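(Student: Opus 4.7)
The plan is to establish both implications by observing that $\mathcal I(G)$ is essentially a spanning subgraph of $\mathscr I(G)$, and that every ``intersection edge'' in $\mathscr I(G)$ can be resolved into a short path of ``inclusion edges'' by using the intersection subgroup itself as a bridge.

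For the forward direction, I would observe that every edge of $\mathcal I(G)$ is an edge of $\mathscr I(G)$: if $H$ and $K$ are adjacent in $\mathcal I(G)$ with, say, $H \subset K$, then $H \cap K = H$ is a non-trivial subgroup (recall vertices are non-trivial proper subgroups), so $H$ and $K$ are adjacent in $\mathscr I(G)$. Since $\mathcal I(G)$ and $\mathscr I(G)$ share the same vertex set, connectivity of $\mathcal I(G)$ immediately forces connectivity of $\mathscr I(G)$.

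For the converse, assume $\mathscr I(G)$ is connected and let $H, K$ be any two vertices. Choose a path $H = L_0, L_1, \ldots, L_n = K$ in $\mathscr I(G)$. I would show that each consecutive pair $L_i, L_{i+1}$ is already connected in $\mathcal I(G)$ by a walk of length at most two; concatenating these walks then yields an $H$--$K$ walk in $\mathcal I(G)$. For the local step, set $M_i := L_i \cap L_{i+1}$. By definition of $\mathscr I(G)$-adjacency, $M_i$ is non-trivial, and since $M_i \subseteq L_i \subsetneq G$, it is a proper subgroup, hence a vertex of $\mathcal I(G)$ (or equal to $L_i$ or $L_{i+1}$). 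If $M_i = L_i$ then $L_i \subsetneq L_{i+1}$ and $L_i$ is adjacent to $L_{i+1}$ directly in $\mathcal I(G)$; symmetrically if $M_i = L_{i+1}$. Otherwise, $M_i$ is a non-trivial proper subgroup distinct from both $L_i$ and $L_{i+1}$ and properly contained in each, giving the path $L_i - M_i - L_{i+1}$ in $\mathcal I(G)$.

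The argument is essentially a case distinction plus a bridging construction, and there is no real obstacle beyond keeping track of the edge cases in the definition of ``proper subgroup.'' The one subtlety worth flagging is ensuring that $M_i$ qualifies as a vertex of $\mathcal I(G)$ (that is, is non-trivial and properly contained in $G$), which is exactly what the $\mathscr I(G)$-adjacency and the properness of the $L_i$ guarantee. Once that is verified, the two directions combine to give the stated equivalence.
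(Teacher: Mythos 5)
Your argument is correct and follows essentially the same route as the paper's own proof: one direction via the observation that $\mathcal I(G)$ is a spanning subgraph of $\mathscr I(G)$, and the other by replacing each intersection edge $L_i\,\text{--}\,L_{i+1}$ with either a direct inclusion edge or the path $L_i - L_i\cap L_{i+1} - L_{i+1}$. Your extra care in checking that $L_i\cap L_{i+1}$ is a legitimate vertex (non-trivial and proper) is a welcome refinement of a point the paper leaves implicit, but the underlying idea is identical.
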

\begin{proof}
Since $\mathcal I(G)$ is a spanning subgraph of $\mathscr{I}(G)$, so if $\mathscr{I}(G)$ is connected, then so is $\mathscr{I}(G)$. Now let $H$, $K$ be two adjacent vertices in $\mathscr{I}(G)$. Then exactly one of the following holds: $H \subset K$, $K \subset H$, neither $H \subset K$ nor $K \subset H$ but $|H \cap K |>1$.   If one of the first two possibilities holds, then $H$ and $K$ are adjacent in $\mathcal I(G)$. If the third condition holds, then $H-H\cap K-K$ is a path in $\mathcal I(G)$. So it follows that if $\mathscr{I}(G)$ is connected, then so is $\mathcal I(G)$.
\end{proof}

In \cite{Shen}, Rulin Shen et al classified all the finite groups whose intersection graphs of subgroups are disconnected. So as a consequence of Theorem~\ref{inclusion graphs 41}, we have the following result.
%\begin{thm}
%	Let $G$ be a finite group. Then $\mathcal I(G)$ is disconnected if and only if $G$ is one of $\mathbb Z_p \times \mathbb Z_q$, 
%	where both $p$, $q$ are primes, or a Frobenius group whose complement is a prime order group and the kernel is a minimal normal subgroup.
%\end{thm}

\begin{cor}\label{inclusion graphs 4}
	Let $G$ be a finite group. Then $\mathcal I(G)$ is disconnected if and only if $G$ is one of $\mathbb Z_p \times \mathbb Z_q$, 
	where both $p$, $q$ are primes, or a Frobenius group whose complement is a prime order group and the kernel is a minimal normal subgroup.
\end{cor}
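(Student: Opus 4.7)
The plan is to derive this corollary essentially as a one-step deduction from Theorem~\ref{inclusion graphs 41} together with the classification of Shen et al.\ in \cite{Shen}. The key observation is that Theorem~\ref{inclusion graphs 41} asserts the equivalence ``$\mathcal I(G)$ connected $\Leftrightarrow$ $\mathscr I(G)$ connected''. Taking the contrapositive yields ``$\mathcal I(G)$ disconnected $\Leftrightarrow$ $\mathscr I(G)$ disconnected'', which reduces the problem completely to a question about the intersection graph of subgroups.

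The second step is to invoke the theorem of Shen et al.\ \cite{Shen}, which gives precisely the classification of finite groups $G$ for which $\mathscr I(G)$ is disconnected: namely, $G \cong \mathbb Z_p \times \mathbb Z_q$ for distinct primes $p,q$, or $G$ is a Frobenius group with prime-order complement and with kernel a minimal normal subgroup. Substituting this classification into the equivalence from Theorem~\ref{inclusion graphs 41} immediately gives the stated list.

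Since both ingredients are already stated in the excerpt or cited from the literature, there is no substantial obstacle; the proof is essentially a citation. The only thing to check is that the classification from \cite{Shen} is being invoked under precisely the hypothesis we need (finiteness of $G$), which is assumed in the corollary. No verification of the individual groups on the list is required: for $\mathbb Z_p \times \mathbb Z_q$ the disconnection can be seen directly from Theorem~\ref{inclusion graph 3} (every proper subgroup has prime order, so $\mathcal I(G)$ is totally disconnected with more than one vertex), and for Frobenius groups of the stated form the disconnection follows from the structure of their subgroup lattice together with Theorem~\ref{inclusion graphs 41}; but these consistency checks are redundant once the equivalence and the cited classification are in place.
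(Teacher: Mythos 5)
Your proposal matches the paper's own argument exactly: the paper likewise obtains this corollary as an immediate consequence of Theorem~\ref{inclusion graphs 41} combined with the classification by Shen et al.\ of finite groups whose intersection graph of subgroups is disconnected. The only small slip is that you quote that classification with \emph{distinct} primes $p,q$, whereas the corollary's wording (and the case $\mathbb Z_p\times\mathbb Z_p$, which is totally disconnected by Theorem~\ref{inclusion graph 3}) allows $p=q$.
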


\begin{thm}\label{inclusion graphs 10}
 Let $G$ be a finite abelian group and $p, q, r$ be distinct primes. Then $\mathcal I(G)$ is planar if and only if $G$ is one of  $\mathbb Z_{p^\alpha}(\alpha=2,3,4,5)$,
 $\mathbb Z_{p^\alpha q}(\alpha = 1,2,3, 4)$, $\mathbb Z_{p^2 q^2}$,
  $\mathbb Z_{pqr}$, $\mathbb Z_{p^2qr}$,
  $\mathbb Z_{pqrs}$, $\mathbb Z_p\times \mathbb Z_p$, $\mathbb Z_{p^2}\times \mathbb Z_p$ or $\mathbb Z_{pq}\times \mathbb Z_p$.
\end{thm}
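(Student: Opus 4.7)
The plan is to combine the clique/height bound of Corollary~\ref{inclusion graph c4} with the subgroup pull-back of Theorem~\ref{inclusion graph 13} and then carry out a finite case check. Since a planar graph cannot contain $K_5$, planarity forces $\omega(\mathcal I(G))\le 4$, i.e.\ the height of $L(G)$ is at most $5$. For a finite abelian group $G = \mathbb Z_{p_1^{\alpha_1}} \times \cdots \times \mathbb Z_{p_r^{\alpha_r}}$, every maximal chain in $L(G)$ has prime quotients, so the height equals $\Omega(|G|) = \sum_i \alpha_i$; hence $\sum_i \alpha_i \le 5$, leaving only a finite list of structural types to check.

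For the forward direction I would exclude, via Theorem~\ref{inclusion graph 13}, every $G$ that contains a subgroup whose own inclusion graph is already non-planar. The two principal obstructions are: \emph{(a)}~$\mathcal I(\mathbb Z_p \times \mathbb Z_p \times \mathbb Z_p)$ is the Levi graph of the projective plane $PG(2,p)$; for $p \ge 3$ the bipartite Euler inequality $E \le 2V - 4$ fails because $V = 2(p^2 + p + 1)$ and $E = (p^2 + p + 1)(p + 1)$ give $E - (2V - 4) = p^3 - 2p^2 - 2p + 1 > 0$, while for $p = 2$ this is the Heawood graph, known to be non-planar; \emph{(b)}~$\mathcal I(\mathbb Z_{p^2} \times \mathbb Z_{p^2})$ contains $K_{p+1,p+1} \supseteq K_{3,3}$, because $G$ has exponent $p^2$, so every order-$p^3$ subgroup is isomorphic to $\mathbb Z_{p^2} \times \mathbb Z_p$ and hence contains the full socle $\cong \mathbb Z_p \times \mathbb Z_p$, which in turn contains all $p+1$ order-$p$ subgroups. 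Obstruction (a) removes every $G$ with $p$-rank $\ge 3$ at some prime, and (b) removes every $G$ with two invariant factors $\ge p^2$ at the same prime. The remaining non-listed candidates (the cyclic groups of orders $p^3q^2$, $p^3qr$, $p^2q^2r$, $p^2qrs$, $pqrst$, and non-cyclic mixed-prime groups such as $\mathbb Z_p\times\mathbb Z_p\times\mathbb Z_q\times\mathbb Z_q$ and $\mathbb Z_{p^2}\times\mathbb Z_p\times\mathbb Z_q$) are each handled by an ad~hoc $K_{3,3}$-certificate pulled out of the subgroup lattice: in the cyclic case, by the triple $\{\mathbb Z_p,\mathbb Z_q,\mathbb Z_{pq}\}$ paired against any three of its common proper multiples, which always exist under $\Omega(|G|)\ge 5$ with the right exponent pattern; in the non-cyclic case, by pairing three order-$p$ subgroups of the socle against three suitable common supergroups.

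For the converse direction each listed $G$ has a subgroup lattice small enough that $\mathcal I(G)$ can be drawn explicitly. For cyclic $G$ the lattice is the divisor lattice of $|G|$, and under $\sum_i\alpha_i\le 5$ this is thin enough that the comparability graph on the non-extremal divisors admits an obvious layer-by-layer planar embedding; crucially, one checks directly that for every listed cyclic order the $\{\mathbb Z_p,\mathbb Z_q,\mathbb Z_{pq}\}$-construction above fails (there are at most two common proper multiples of $pq$ that are not $|G|$), so the $K_{3,3}$ certificate used on the non-planar side is genuinely unavailable. The three non-cyclic exceptions $\mathbb Z_p \times \mathbb Z_p$, $\mathbb Z_{p^2} \times \mathbb Z_p$ and $\mathbb Z_{pq} \times \mathbb Z_p$ give, respectively, $\overline K_{p+1}$, the tree in Figure~\ref{incl fig}(a), and the graph in Figure~\ref{incl fig}(c), all established in Theorem~\ref{inclusion graph 3} and the proof of Theorem~\ref{incl 121}, and are visibly planar. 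The hard part of the argument is precisely this bookkeeping: at the delicate boundary pairs such as $\mathbb Z_{p^3q^2}$ versus the listed $\mathbb Z_{p^2q^2}$, or $\mathbb Z_{p^2}\times\mathbb Z_p\times\mathbb Z_q$ versus the listed $\mathbb Z_{pq}\times\mathbb Z_p$, a single increment in one exponent flips planarity, and one must check both that the $K_{3,3}$ subdivision really becomes available on the non-planar side and that it does not already appear on the planar side.
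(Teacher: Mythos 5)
Your overall strategy is essentially the paper's: bound the height of $L(G)$ by $5$ via $K_5$-freeness, eliminate the excluded groups with $K_{3,3}$ (or subdivision) certificates pulled back through Theorem~\ref{inclusion graph 13}, and exhibit explicit plane drawings for the listed groups. Your two packaged obstructions are correct and a little slicker than the paper's: identifying $\mathcal I(\mathbb Z_p\times\mathbb Z_p\times\mathbb Z_p)$ with the Levi graph of $PG(2,p)$ and invoking the bipartite edge bound (with the Heawood graph for $p=2$) replaces the ad hoc subdivision of Figure~\ref{incl:planar}(e), and the observation that every maximal subgroup of $\mathbb Z_{p^2}\times\mathbb Z_{p^2}$ contains the whole socle gives $K_{p+1,p+1}$ cleanly. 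The cyclic bookkeeping (the triple $\{\mathbb Z_p,\mathbb Z_q,\mathbb Z_{pq}\}$ against three proper common overgroups) also checks out for all five excluded cyclic orders.

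There is, however, a genuine gap in the non-cyclic, single-prime, rank-two case. Write $\mathbb Z_{p^3}\times\mathbb Z_p=\langle a\rangle\times\langle b\rangle$; this group has $\Omega(|G|)=4\le 5$, is not on the theorem's list, has $p$-rank $2$ and only one invariant factor of order at least $p^2$, so neither of your obstructions (a), (b) applies. Your proposed certificate for the leftover non-cyclic groups --- three order-$p$ subgroups of the socle against three common overgroups --- does not exist here: the quotient by the socle $\langle a^{p^2},b\rangle$ is cyclic of order $p^2$, so the only proper subgroups containing the socle are the socle itself and the single non-cyclic maximal subgroup $\langle a^p,b\rangle$, giving only $K_{2,p+1}$. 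Worse, no certificate can exist: every cyclic subgroup of order $p^2$ contains only $\langle a^{p^2}\rangle$ among the order-$p$ subgroups, and every cyclic maximal subgroup contains only the chain $\langle a^p\rangle\supset\langle a^{p^2}\rangle$, so exactly four vertices of $\mathcal I(\mathbb Z_{p^3}\times\mathbb Z_p)$ --- namely $\langle a^{p^2}\rangle$, $\langle a^p\rangle$, the socle, and $\langle a^p,b\rangle$ --- have degree at least $3$; hence the graph contains no subdivision of $K_{3,3}$ or $K_5$ and is planar by Kuratowski's theorem. So the forward direction cannot be completed as you describe, and carried out honestly your bookkeeping would reveal that the theorem's list is itself incomplete. (The paper's proof shares this blind spot: its final non-cyclic case only treats primary decompositions with at least three factors, so $\mathbb Z_{p^3}\times\mathbb Z_p$ is never examined. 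The neighbouring group $\mathbb Z_{p^4}\times\mathbb Z_p$ is unproblematic for you, since there the socle has three proper overgroups, of orders $p^2$, $p^3$, $p^4$, and your $K_{3,3}$ does appear.)
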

\begin{proof}
 Let $|G|=p_1^{\alpha_1}p_2^{\alpha_2}\ldots p_k^{\alpha_k}$, where $p_i$'s are distinct primes and $\alpha_i\geq 1$.
 
  \noindent \textbf{Case 1:} Let $G$ be abelian.
  
 \noindent \textbf{Subcase 1a:} $k=1$. By Theorem~\ref{inclusion graph 112}, $\mathcal I(G)$ is planar if and only if $\alpha = 2,3,4,5$.
 
 \noindent \textbf{Subcase 1a:} $k=2$.  
 \begin{enumerate}[\normalfont (i)] 	
 	\item  If either $\alpha_1\geq 5$ or $\alpha_2\geq 5$, then $L(G)$ has a chain of length at least four and so $\mathcal I(G)$ contains $K_5$ as a subgraph. This implies that $\mathcal I(G)$ is non-planar.
 	
 	\item If $\alpha_1\geq 3$ and $\alpha_2\geq 2$, then $G$ has a subgroups $H_1$, $H_2$, $H_3$, $N_1$, $N_2$, $N_3$ of orders $p_1$, $p_2$, $p_1p_2$, $p_1^2p_2$, $p_1^2p_2^2$, $p_1p_2^2$, respectively and so $K_{3,3}$ is a subgraph of $\mathcal I(G)$ with bipartition $X:=\{H_1$, $H_2$, $H_3\}$ and 
 	$Y:=\{N_1$, $N_2$, $N_3\}$. This implies that $\mathcal I(G)$ is non-planar.
 	
 	\item If $\alpha_1 = 4$ and $\alpha_2 =1$, then $G$ has  subgroups of order $p_1$, $p_1^2$, $p_1^3$, $p_1^4$, $p_2$, $p_1p_2$, $p_1^2p_2$, $p_1^3p_2$, let them be $H_i$, $i=1,2,\ldots , 8$, respectively and these are the only proper subgroups of $G$. Here $\mathcal I(G)$ is planar and a corresponding plane embedding is shown in Figure~\ref{incl:planar}(a).

 	\begin{figure}
 	    \centering
 	    \begin{subfigure}[b]{0.3\textwidth}
 	        \includegraphics[width=\textwidth]{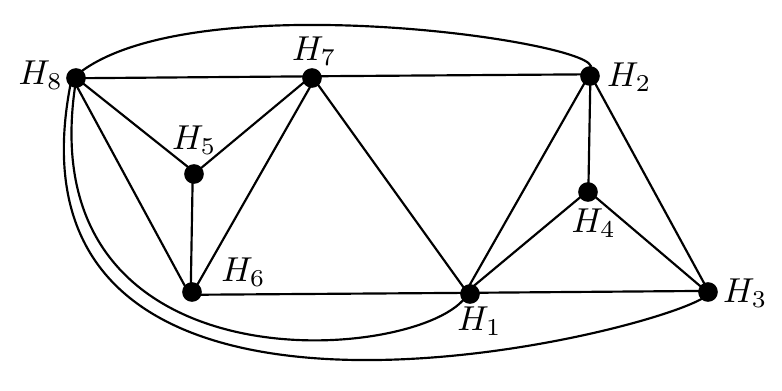}
 	        \caption{}
 	        \label{incl fig7}
 	    \end{subfigure}
 	            \begin{subfigure}[b]{0.2\textwidth}
 	        \includegraphics[width=\textwidth]{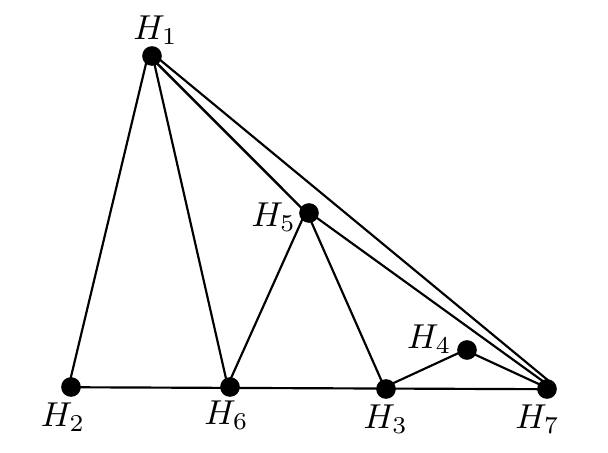}
 	        \caption{}
 	        \label{incl fig8}
 	    \end{subfigure}
 	     	       \begin{subfigure}[b]{0.3\textwidth}
 	        \includegraphics[width=\textwidth]{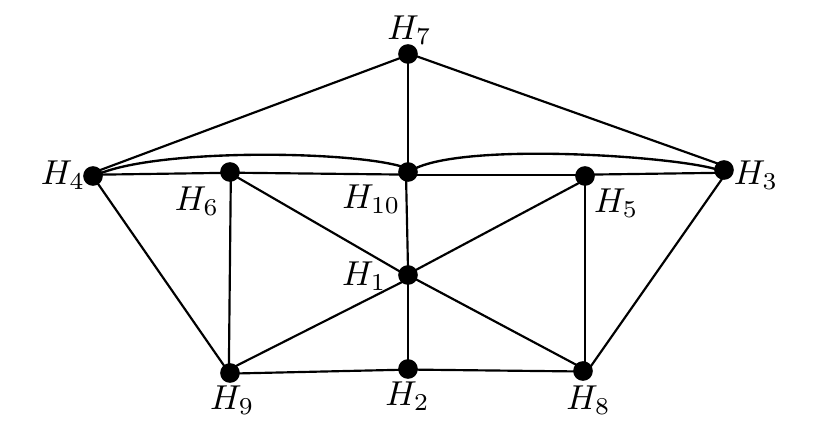}
 	        \caption{}
 	        \label{incl fig9}
 	    \end{subfigure}
 	    
 	     	    \begin{subfigure}[b]{0.3\textwidth}
 	            \includegraphics[width=\textwidth]{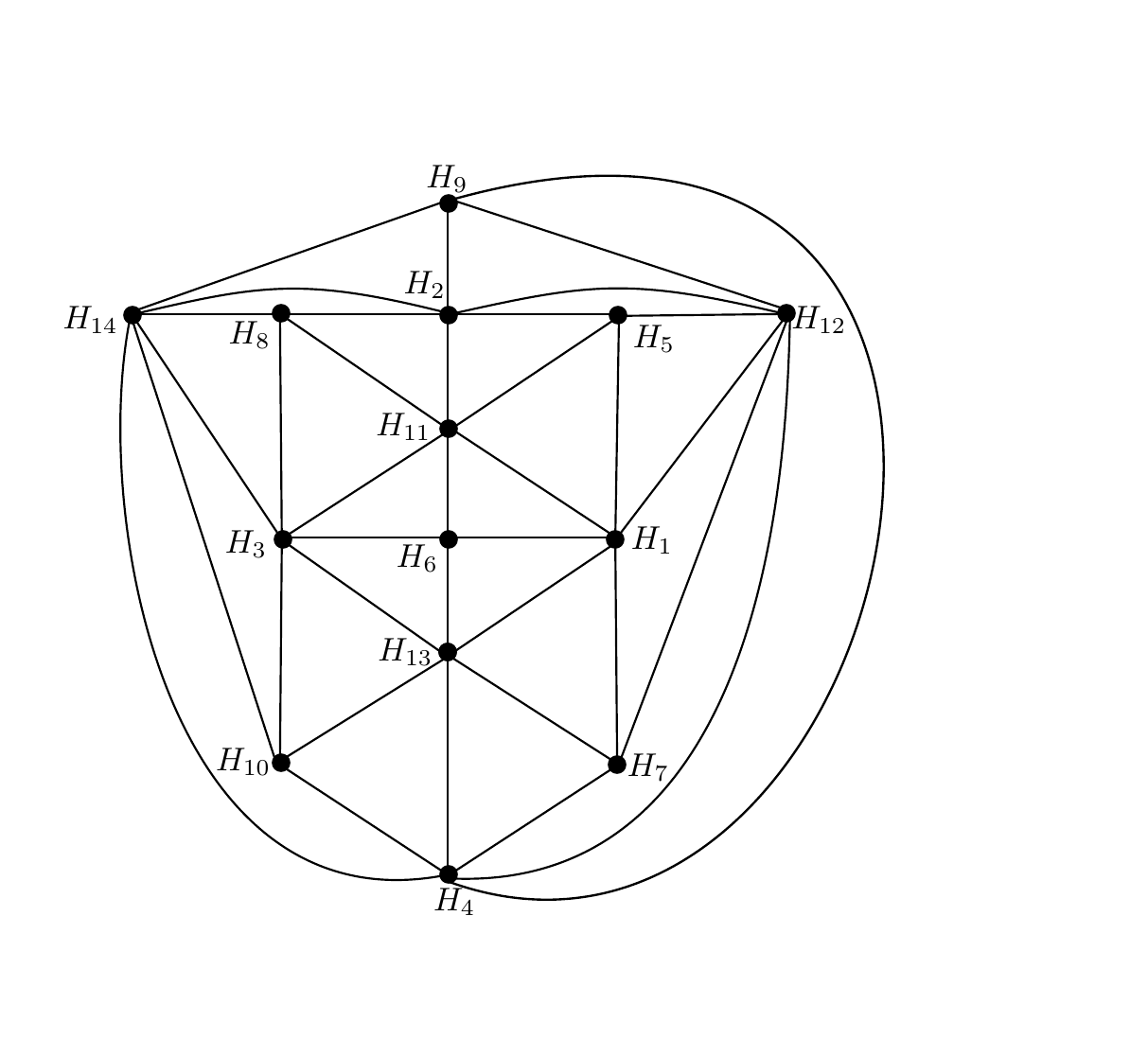}
 	            \caption{}
 	            \label{incl fig10}
 	        \end{subfigure}
 	        \begin{subfigure}[b]{0.3\textwidth}
 	                \includegraphics[width=\textwidth]{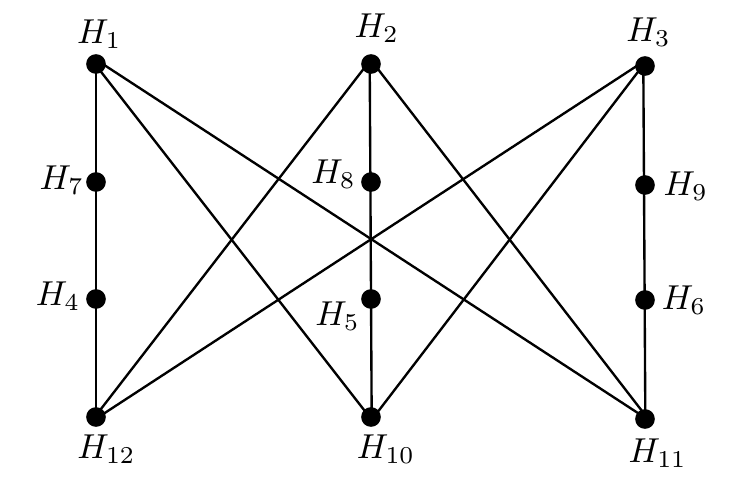}
 	                \caption{}
 	                \label{incl fig11}
 	            \end{subfigure}
\caption{(a) $\mathcal I(\mathbb Z_{p_1^4p_2})$, (b) $\mathcal I(\mathbb Z_{p_1^2p_2^2})$, (c) $\mathcal I(\mathbb Z_{p_1^2p_2p_3})$, (d) $\mathcal I(\mathbb Z_{p_1p_2p_3p_4})$, (e) a subdivision of $K_{3,3}$ in $\mathcal I(\mathbb Z_p\times \mathbb Z_p\times \mathbb Z_p)$}\label{incl:planar}
\end{figure}
 	
%\begin{figure}[ ht ]
%\begin{center}
%\begin{minipage}{.3\textwidth}
% \begin{center}
% \includegraphics[scale =0.6]{incl5.pdf}
% \caption{$\mathcal I(\mathbb Z_{p_1^4p_2})$}
% 	\label{incl fig7}
% \end{center}
% \end{minipage}
% \begin{minipage}{.3\textwidth}
% \begin{center}
% \includegraphics[scale =0.6]{incl6.pdf}
% \caption{$\mathcal I(\mathbb Z_{p_1^2p_2^2})$}
% \label{incl fig8}
% \end{center}
% \end{minipage}
% \begin{minipage}{.3\textwidth}
%  \begin{center}
%  \includegraphics[scale =0.6]{incl7.pdf}
%  \caption{$\mathcal I(\mathbb Z_{p_1^2p_2p_3})$}
%  \label{incl fig9}
%  \end{center}
%  \end{minipage}\\
%  \begin{minipage}{.3\textwidth}
%   \begin{center}
%   \includegraphics[scale =0.6]{incl8.pdf}
%   \caption{$\mathcal I(\mathbb Z_{p_1p_2p_3p_4})$}
%   \label{incl fig10}
%   \end{center}
%   \end{minipage}
%   \begin{minipage}{.6\textwidth}
%      \begin{center}
%      \includegraphics[scale =0.8]{incl11.pdf}
%      \caption{Subdivision of $K_{3,3}$ in $\mathcal I(\mathbb Z_p\times \mathbb Z_p\times \mathbb Z_p$}
%      \label{incl fig11}
%      \end{center}
%      \end{minipage}
%\end{center}
%\end{figure}

% 	\begin{figure}[ ht ]
% 	\begin{center} 
% 	 \includegraphics[scale=.7]{incl5.pdf}
% 	\caption{$\mathcal I(\mathbb Z_{p_1^4p_2})$}
% 	\label{incl fig7}
% 	\end{center}
% 	\end{figure}
 	
 	\item If $\alpha_1 \leq 3$ and $\alpha_2 =1$, then $\mathcal I(G)$ is a subgraph of $\mathcal I(\mathbb Z_{p_1^4p_2})$ and so $\mathcal I(G)$ is planar.
 	
 	\item If $\alpha_1 =2$ and $\alpha_2 =2$, then $G$ has subgroups of order $p_1$, $p_1^2$, $p_2$, $p_2^2$, $p_1p_2$, $p_1^2p_2$, $p_1p_2^2$, let them be $H_i$, $i=1,2,\ldots , 7$ respectively and these are the only proper subgroups of $G$. Here $\mathcal I(G)$ is planar and a corresponding plane embedding is  shown in Figure~\ref{incl:planar}(b).
% 	\begin{figure}[ ht ]
% 	\begin{center} 
% 	 \includegraphics[scale=.7]{incl6.pdf}
% 	\caption{$\mathcal I(\mathbb Z_p^2\times \mathbb Z_p)$}
% 	\label{incl fig8}
% 	\end{center}
% 	\end{figure}
 \end{enumerate}
 
 \noindent \textbf{Subcase 1b:} $k=3$. 
 \begin{enumerate}[\normalfont (i)]
 	\item  If $\alpha_1 \geq  3$, $\alpha_2 =1=\alpha_3$,  then let $H_1$, $H_2$, $H_3$, $N_1$, $N_2$, $N_3$ be subgroups of $G$ of orders $p_1$, $p_2$, $p_1p_2$, $p_1p_2p_3$, 
 	$p_1^2p_2$, $p_1^2p_2p_3$, respectively. Then $K_{3,3}$ is a subgraph of $\mathcal I(G)$ with bipartition $X:=\{H_1$, $H_2$, $H_3\}$ and 
 	$Y:=\{N_1$, $N_2$, $N_3\}$ and so $\mathcal I(G)$ is non-planar.
 	
 	\item If $\alpha_1$, $\alpha_2\geq 2$, $\alpha_3\geq 1$, then we can use a similar argument as in (i) above to show $\mathcal I(G)$ is non-planar.
 	
 	\item $\alpha_1 =2 $, $\alpha_2 =1=\alpha_3$, then $G$ has subgroups of order $p_1$, $p_1^2$, $p_2$, $p_3$, $p_1p_2$, $p_1p_3$,  $p_1^2p_2$, $p_2p_3$, $p_1^2p_3$, $p_1p_2p_3$, let them be $H_i$, $i=1,\ldots , 10$ respectively and these are the only proper subgroups of $G$. Here $\mathcal I(G)$ is planar and a corresponding plane embedding is shown in Figure~\ref{incl:planar}(c).
% 	\begin{figure}[ ht ]
% 	\begin{center} 
% 	 \includegraphics[scale=.7]{incl7.pdf}
% 	\caption{$\mathcal I(\mathbb Z_{p_1^2p_2p_3})$}
% 	\label{incl fig9}
% 	\end{center}
% 	\end{figure}
 	
 	\item If $\alpha_1 =\alpha_2 =\alpha_3 =1$, then $\mathcal I(G)$ is a subgraph of $\mathcal I(\mathbb Z_{p^2qr})$ and so $\mathcal I(G)$ is planar.
 	
 \end{enumerate}
 
 \noindent \textbf{Subcase 1c:} $k=4$. 
 \begin{enumerate}[\normalfont (i)]
 
	\item If  $\alpha_1\geq 2$, $\alpha_2$, $\alpha_3$, $\alpha_4\geq 1$, then let $H_1$, $H_2$, $H_3$, $N_1$, $N_2$, $N_3$ be subgroups of $G$ of orders $p_1$, $p_2$, $p_1p_2$, $p_1p_2p_3$, 
 $p_1p_2p_4$, $p_1^{\alpha_1}p_2p_3$, respectively and so $K_{3,3}$ is a subgraph of $\mathcal I(G)$ with bipartition $X:=\{H_1$, $H_2$, $H_3\}$ and 
 $Y:=\{N_1$, $N_2$, $N_3\}$. This implies that $\mathcal I(G)$ is non-planar.
 
 	\item If $\alpha_1=\alpha_2=\alpha_3= \alpha_4 = 1$, then $G$ has subgroups of order $p_1$, $p_2$, $p_3$, $p_4$, $p_1p_2$, $p_1p_3$, $p_1p_4$, $p_2p_3$, $p_2p_4$, $p_3p_4$, $p_1p_2p_3$, $p_1p_2p_4$, $p_1p_3p_4$, $p_2p_3p_4$, let them be $H_i$, $i=1,2, \ldots , 14$ respectively and these are the only proper subgroups of $G$. Here $\mathcal I(G)$ is planar and a corresponding plane embedding is shown in Figure~\ref{incl:planar}(d).
% 	\begin{figure}[ ht ]
% 	\begin{center} 
% 	 \includegraphics[scale=.7]{incl8.pdf}
% 	\caption{$\mathcal I(\mathbb Z_{p_1p_2p_3p_4})$}
% 	\label{incl fig10}
% 	\end{center}
% 	\end{figure}
   \end{enumerate}
 \noindent \textbf{Subcase 1d:} $k\geq 5$.  Let  $H_1$, $H_2$, $H_3$, $N_1$, $N_2$, $N_3$ be subgroups of $G$ of orders $p_1$, $p_2$, $p_1p_2$, $p_1p_2p_3$, 
 $p_1p_2p_4$, $p_1p_2p_5$, respectively and so $K_{3,3}$ is a subgraph of $\mathcal I(G)$ with bipartition $X:=\{H_1$, $H_2$, $H_3\}$ and 
 $Y:=\{N_1$, $N_2$, $N_3\}$. This implies that $\mathcal I(G)$ is non-planar.
%\end{proof}
%
%
%
%\begin{thm}\label{inclusion graph 222}
%Let $G$ be a finite abelian group and $p$, $q$ be distinct primes. Then $\mathcal I(G)$ is planar if and only if $G$ is  one of $\mathbb Z_p\times \mathbb Z_p$, $\mathbb Z_{p^2}\times \mathbb Z_p$ or $\mathbb Z_{pq}\times \mathbb Z_p$. 
%\end{thm}
%\begin{proof}

\noindent \textbf{Case 2:} Let $G$ be non-cyclic.

 \begin{enumerate}[\normalfont (i)]
\item If $G\cong \mathbb Z_p\times \mathbb Z_p$, then by Corollary~\ref{inclusion graphs 3}(1), $\mathcal I(G)$ is planar.

\item If $G\cong \mathbb Z_{p^2}\times \mathbb Z_p$, then  by  Figure~\ref{incl fig}(a), $\mathcal I(G)$ is  planar.

\item If $G\cong \mathbb Z_{pq}\times \mathbb Z_p$, then by Figure~\ref{incl fig}(c), $\mathcal I(G)$ is planar.

\item If $G\cong \mathbb Z_{p^2q}\times \mathbb Z_p$, then $\mathbb Z_{p^2}\times \mathbb Z_p$ is a subgroup of $G$ and by Figure~\ref{incl fig}(a), 
$G$ has a unique subgroup of order $p$, say $H$; let $H_1$, $H_2$, $H_3$ be subgroups of $\mathbb Z_{p^2}\times \mathbb Z_p$ of order 
$p^2$; let $N$ be a subgroup of $G$ of order $q$. 
Then $K_{3,3}$ is a subgraph of $\mathcal I(G)$ with bipartition $X:=\{H_1N$, $H_2N$, $H_3N\}$, $Y:=\{N$, $H$ $HN\}$ and so $\mathcal I(G)$ is non-planar.

 \item If $G\cong \mathbb Z_{p^2}\times \mathbb Z_{p^2}=\langle a,b~|~a^{p^2}=b^{p^2}, ab=ba\rangle$, then  $K_{3,3}$ is a subgraph of $\mathcal I(G)$ with bipartition $X:=\{\langle a, b^p\rangle$ $\langle a^p,b\rangle$, $\langle a^p,b^p\rangle\}$, $Y:=\{\langle a^p\rangle$, $\langle b^p\rangle$, $\langle a^pb^p\rangle\}$ and so $\mathcal I(G)$ is non-planar.

\item If $G\cong \mathbb Z_p\times \mathbb Z_p\times \mathbb Z_p$, then $H_1:=\langle (1,0,0)\rangle$, $H_2:=\langle (0,1,0)\rangle$, $H_3:=\langle (0,0,1)\rangle$, $H_4:=\langle (0,1,1)\rangle$, $H_5:=\langle (1,0,1)\rangle$, $H_6:=\langle (1,1,0)\rangle$, $H_7:=\langle (1,0,0), (0,1,1)\rangle$, $H_8:=\langle (0,1,0), (1,0,1)\rangle$, $H_9:=\langle (0,0,1), (1,1,0)\rangle$, $H_{10}:=\langle (1,0,0), (0,0,1)\rangle$, $H_{11}:=\langle (1,0,0),$ $(0,1,0)\rangle$, $H_{12}:=\langle (0,1,0), (0,0,1)\rangle$ are subgroups of $G$. Here $\mathcal I(G)$ has a subdivision of $K_{3,3}$ as a 
subgraph, which is shown in Figure~\ref{incl:planar}(e) and so  $\mathcal I(G)$ is non-planar.
%\begin{figure}[ht]
%\begin{center}
%\includegraphics[scale=.7]{incl11.pdf}
%\caption{Subdivision of $K_{3,3}$ in $\mathbb Z_p\times \mathbb Z_p\times \mathbb Z_p$}
%\label{incl fig11}
%\end{center}
%\end{figure}

 \item If $G\cong \mathbb Z_{p_1^{\alpha_1}}\times \mathbb Z_{p_2^{\alpha_2}}\times \cdots \mathbb \times Z_{p_k^{\alpha_k}}$, where $k \geq 3$, $p_i$'s are primes and at least two of them are equal (since $G$ is non-cyclic, all the primes cannot be distinct here), $\alpha_i \geq 1$ are integers,
 $i=1, \ldots,k$. Then,    $G$  has one of $\mathbb Z_{p_i^2p_j}\times \mathbb Z_{p_i}$, $\mathbb Z_{p_i^2}\times \mathbb Z_{p_i^2}$ or $\mathbb Z_{p_i}\times \mathbb Z_{p_i}\times \mathbb Z_{p_i}$ as a subgroup for some $i$ and $j$, $i\neq j$. So by the above arguments, it follows 
that $\mathcal I(G)$ is non-planar.
 \end{enumerate}
Proof follows by putting together the above cases.
\end{proof}

% \begin{cor}\label{inclusion graphs c2}
%  Let $G$ be a finite cyclic group. Then $\mathcal I(G)$ is connected if and only if $G\ncong \mathbb Z_{pq}$
% \end{cor}
% \begin{proof}
%  By Theorem~\ref{inclusion graphs 4}, the proof follows. 
% \end{proof}

\begin{thm}\label{inclusion graphs 11}
 If $G$ is a finite abelian group, then $diam(\mathcal I(G)) \in \{1, 2, 3, 4, \infty\}$.
\end{thm}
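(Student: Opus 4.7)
The plan is to split into two cases based on connectedness of $\mathcal I(G)$. By Corollary~\ref{inclusion graphs 4}, among finite abelian groups $\mathcal I(G)$ is disconnected precisely when $G\cong\mathbb Z_p\times\mathbb Z_q$ for some primes $p$ and $q$ (allowing $p=q$); for those groups $diam(\mathcal I(G))=\infty$, which lies in the stated set. It remains to show $diam(\mathcal I(G))\leq 4$ whenever $\mathcal I(G)$ is connected.

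For the upper bound I would route any two proper non-trivial subgroups $H$ and $K$ through maximal overgroups. Using finiteness of $G$, pick maximal subgroups $M_H\supseteq H$ and $M_K\supseteq K$ of $G$; the (possibly trivial) edges $H-M_H$ and $M_K-K$ contribute length at most $2$. The crux is to show that any two distinct maximal subgroups $M_H$ and $M_K$ satisfy $d_{\mathcal I(G)}(M_H,M_K)\leq 2$. Since every maximal subgroup of an abelian group has prime index, write $p=[G:M_H]$ and $q=[G:M_K]$; distinctness and maximality force $M_H+M_K=G$, whence the product formula gives $|M_H\cap M_K|=|M_H||M_K|/|G|=|G|/(pq)$. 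This is at least $2$ unless $|G|=pq$, a case that forces $G\cong\mathbb Z_p\times\mathbb Z_q$ and is already absorbed into the disconnected case above. Thus $M_H\cap M_K$ is a non-trivial proper subgroup of $G$, giving the path $M_H-M_H\cap M_K-M_K$ in $\mathcal I(G)$.

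Concatenating yields the walk $H-M_H-M_H\cap M_K-M_K-K$ of length at most $4$, so $diam(\mathcal I(G))\leq 4$ in the connected case and the theorem follows. The main obstacle I expect in this plan is verifying that $M_H\cap M_K$ is genuinely a vertex of $\mathcal I(G)$: its non-triviality is controlled by the index computation above (and requires the exclusion of $|G|=pq$), while properness is automatic from $M_H\cap M_K\subseteq M_H\subsetneq G$. Once these two bookkeeping checks are in place, the argument reduces to combining the existence of maximal overgroups with the product formula $|M_H||M_K|=|M_H+M_K||M_H\cap M_K|$ valid in any abelian group.
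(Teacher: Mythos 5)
Your argument is correct, and it reaches the bound $diam(\mathcal I(G))\leq 4$ by a genuinely different route from the paper. The paper works \emph{downward}: after disposing of the disconnected and cyclic $p$-power cases, it splits into four subcases according to whether $|H|$ and $|K|$ are prime and whether $H\cap K$ is trivial, and in each subcase builds an explicit path through products such as $HK$, $HK'$ or $H'K'$ (with $H'\subset H$, $K'\subset K$ proper), the worst case $H-H'-H'K'-K'-K$ giving length $4$. You work \emph{upward} instead: you pass to maximal overgroups $M_H\supseteq H$, $M_K\supseteq K$, use the fact that a maximal subgroup of a finite abelian group has prime index together with $M_H+M_K=G$ and the product formula to get $|M_H\cap M_K|=|G|/(pq)$, and observe that this is a nontrivial proper subgroup exactly when $|G|\neq pq$ --- which is precisely the disconnected situation you have already set aside via Corollary~\ref{inclusion graphs 4}. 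This buys a single uniform walk $H-M_H-(M_H\cap M_K)-M_K-K$ with one clean numerical verification, whereas the paper's case split leaves several ``is this product a proper nontrivial subgroup?'' checks implicit. Two small remarks: your claim that $|G|=pq$ forces $G\cong\mathbb Z_p\times\mathbb Z_q$ should be read as applying only when two \emph{distinct} maximal subgroups exist (for $p=q$ the group $\mathbb Z_{p^2}$ also has order $pq$ but has a unique maximal subgroup, so that case is vacuous for your claim); and you omit the paper's concluding examples $\mathbb Z_{p^2q^2}$, $\mathbb Z_{p^2}\times\mathbb Z_p$, $\mathbb Z_{pq}\times\mathbb Z_{pq}$ showing that the values $2$, $3$, $4$ are actually attained, but that is not required by the statement, which only asserts membership in the set.
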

\begin{proof}
If $\mathcal I(G)$ is disconnected, then $diam(\mathcal I(G))=\infty$. Now we assume that $\mathcal I(G)$ is connected.	
If $G\cong \mathbb Z_{p^\alpha}$, where $p$ is a prime and $\alpha >1$, then by Theorem~\ref{inclusion graph 112}, it follows that $diam(\mathcal I(G))=1$. Now we assume that $G \ncong\mathbb Z_{p^\alpha}$. Since $\mathcal I(G)$ is connected, so the order of not every proper subgroup of $G$ is a prime. Let $H$, $K$ be two proper subgroups of $G$.  If $|H|$, $|K|$ are prime, then $HK$ is a proper subgroup of $G$ and so $H-HK-K$ is a path from $H$ to $K$. If $|H|$ is a prime, $|K|$ is not a prime, then  $H-HK'-K'-K$ is a path from $H$ to $K$, where $K'$ is a proper subgroup of $K$. If $|H|$, $|K|$ are not primes and $|H\cap K|=1$, then 
$H-H'-H'K'-K'-K$ is a path from $H$ to $K$, where $H'$, $K'$ are proper subgroups of $H$, $K$ respectively. If $|H|$, $|K|$ are not primes and $|H\cap K|>1$, then $H-H\cap K-K$ is a path from $H$ to $K$. Thus we have shown that $diam(\mathcal I(G))\leq 4$.
It is easy to see that $diam(\mathcal I(\mathbb Z_{p^2q^2}))=2$,   $diam(\mathcal I(\mathbb Z_{p^2}\times \mathbb Z_p))=3$,  $diam(\mathcal I(\mathbb Z_{pq}\times \mathbb Z_{pq}))=4$. So this shows that $diam(\mathcal I(G))$ takes all the values in $\{1,2,3,4, \infty\}$ and the proof is complete.
\end{proof}

Our next aim is to prove the following result, which describes the girth of the intersection graph of subgroups of finite groups and classifies all the finite groups whose inclusion graph of subgroups is $K_{1,3}$-free.

\begin{thm}\label{inclusion graphs 7}
	Let $G$ be a finite group and $p$, $q$ be distinct primes. Then  
	\begin{enumerate}[{\normalfont (1)}]
		\item $girth(\mathcal I(G))\in\{3, 6, \infty\}$;
		\item $\mathcal I(G)$ is $K_{1,3}$-free if and only if $G$ is one of $\mathbb Z_{p^\alpha}(\alpha= 2,3,4)$, $\mathbb Z_{p^\alpha q}(\alpha=1,2)$, 
		$\mathbb Z_{pqr}$, $\mathbb Z_p\times \mathbb Z_p$ or $\mathbb Z_q\rtimes \mathbb Z_p$;
		% \item $\mathcal I(G)$ is $C_3$-free if and only if $G$ is one of $\mathbb Z_{p^3}$, $\mathbb Z_{p^2q}$, $\mathbb Z_{p^2}\times \mathbb Z_p$, $Q_8$, 
		% $M_8$, $\mathbb Z_q\rtimes \mathbb Z_{p^2}$, $\langle a,b,c~|~a^q=b^p=c^p, ab=ba, ac=ca, \mbox{ord}_q(i)=p\rangle$, $\mathbb Z_q\rtimes_2 \mathbb Z_{p^2}$ or $\mathbb Z_{p^2}\rtimes \mathbb Z_q$;
		%\item $girth(\mathcal I(G))=6$ or infinity if $\mathcal I(G)$ is bipartite; 3 otherwise.
	\end{enumerate}
\end{thm}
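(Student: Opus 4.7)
The plan is to organise both parts around the height of $L(G)$, building on Theorems~\ref{inclusion graph 1}--\ref{inclusion graph 2} and Corollary~\ref{inclusion graphs 3}.

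For (1), I first note that if $L(G)$ has a chain $\{e\}\subsetneq H_1\subsetneq H_2\subsetneq H_3\subsetneq G$, that is, if the height of $L(G)$ is at least $4$, then $H_1,H_2,H_3$ form a triangle in $\mathcal I(G)$, so $girth(\mathcal I(G))=3$. If the height is $2$, Theorem~\ref{inclusion graph 3} gives $girth(\mathcal I(G))=\infty$. The real work is in the height-$3$ case, where by Theorem~\ref{inclusion graph 2} the graph is bipartite with parts $\mathcal A$ (prime-order subgroups) and $\mathcal C$ (maximal subgroups). The key structural lemma I will prove is that for distinct $C,C'\in\mathcal C$, the intersection $C\cap C'$ is either trivial or an atom, since otherwise $\{e\}\subsetneq (C\cap C')\subsetneq C\subsetneq G$ would violate height $3$. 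This immediately rules out $C_4$. Next, I argue that no cycle of length at least $8$ can be minimal: given a shortest cycle $A_1-C_1-A_2-\cdots-A_k-C_k-A_1$ with $k\ge 4$, the subgroup $C_1\cap C_3$ is, by the lemma, either an atom or $\{e\}$. If it is an atom, a case analysis on whether that atom coincides with some $A_i$ or is a new atom produces a $C_4$ or $C_6$ shortcut that contradicts minimality. If it is trivial, then $|C_1|\,|C_3|\le |G|$ together with $p=[G:C_1]$ and $q=[G:C_3]$ being primes forces $|G|\le pq$, which in turn forces $|C_1|\in\{p,q\}$ to be prime; but this contradicts the fact that $C_1$ strictly contains two distinct atoms. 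Hence $k=3$, and the girth is $6$ or $\infty$.

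For (2), I begin with the observation that, under the paper's definition of ``$X$-free'' (no subgraph isomorphic to $X$), $\mathcal I(G)$ is $K_{1,3}$-free if and only if every vertex has degree at most $2$. The implication ``$\Leftarrow$'' is a direct check using graphs already identified: $\mathcal I(\mathbb Z_{p^\alpha})\cong K_{\alpha-1}$ has maximum degree $\alpha-2\le 2$ for $\alpha\in\{2,3,4\}$ by Theorem~\ref{inclusion graph 112}; $\mathcal I(\mathbb Z_{p^2q})\cong P_3$ by~\eqref{e4} and $\mathcal I(\mathbb Z_{pqr})\cong C_6$ by the proof of Theorem~\ref{incl 121}; and $\mathbb Z_{pq},\mathbb Z_p\times\mathbb Z_p,\mathbb Z_q\rtimes\mathbb Z_p$ yield totally disconnected inclusion graphs by Corollary~\ref{inclusion graphs 3}(1). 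For ``$\Rightarrow$'', the degree bound forces the height of $L(G)$ to be at most $4$, since a chain $\{e\}\subsetneq H_1\subsetneq H_2\subsetneq H_3\subsetneq H_4\subsetneq G$ would make $H_2$ adjacent to the three distinct subgroups $H_1,H_3,H_4$. If the height equals $4$, the same bound forces $L(G)$ itself to be a chain (any pair of incomparable proper subgroups would supply a third inclusion-neighbour for some vertex on the chain), hence $G\cong\mathbb Z_{p^4}$. When the height is $2$ or $3$, Corollary~\ref{inclusion graphs 3} restricts $|G|$ to $\{p^2,pq,p^3,p^2q,pqr\}$, and I eliminate every non-listed group of such order by exhibiting a subgroup of degree at least $3$, read off from the explicit subgroup descriptions in the proof of Theorem~\ref{incl 121}: for example $\langle(p,0)\rangle$ in $\mathbb Z_{p^2}\times\mathbb Z_p$ has degree $p+1$, the order-$q$ subgroup of $\mathbb Z_q\rtimes\mathbb Z_{p^2}$ has degree $q+1$, and the Sylow $p$-subgroup of $\mathbb Z_{pq}\times\mathbb Z_p$ has degree $p+1$.

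The hardest step is the ``no cycle of length at least $8$'' argument in part~(1): both sub-cases of $C_1\cap C_3$ must be handled carefully, and the trivial-intersection case in particular relies on the observation that a coatom of a height-$3$ lattice containing two distinct atoms cannot itself have prime order. The forward direction of (2) is tedious rather than subtle, amounting to scanning the groups of orders $p^3$, $p^2q$, and $pqr$ already catalogued in the proof of Theorem~\ref{incl 121} and verifying, outside the listed exceptions, that some subgroup has at least three inclusion-neighbours.
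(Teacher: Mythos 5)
Your overall strategy is genuinely different from the paper's: the paper proves this theorem by splitting over $|G|=p^\alpha$, $p^\alpha q^\beta$, solvable with $\ge 3$ primes, and non-solvable (the last via Thompson's classification of minimal simple groups), whereas you organize everything by the height of $L(G)$. Your part (1) is essentially sound and arguably cleaner than the paper's: height $\ge 4$ gives a triangle, height $2$ gives no edges, and in the height-$3$ case your lemma that two distinct coatoms meet in $\{e\}$ or an atom does kill $C_4$ and, via the shortcut argument, all longer cycles. The one thing you must patch there is the assertion that $[G:C_1]$ and $[G:C_3]$ are prime: a maximal subgroup of a height-$3$ group need not have prime index in the abstract (the four $\mathbb Z_3$'s in $A_4$ have index $4$), and you need to first invoke Corollary~\ref{inclusion graphs 3}(2) to pin $|G|$ to $p^3$, $p^2q$ or $pqr$ and observe that a coatom properly containing two distinct atoms then has order a product of two primes, hence prime index. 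That is a fillable gap.

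Part (2) has a genuine gap in the height-$4$ case. Your parenthetical claim --- that any pair of incomparable proper subgroups supplies a third inclusion-neighbour for some vertex on a fixed maximal chain --- is false: in $\mathbb Z_{p^3}\times\mathbb Z_q$ the subgroup $\mathbb Z_q$ is incomparable to every subgroup on the chain $\{e\}\subset \mathbb Z_p\subset\mathbb Z_{p^2}\subset\mathbb Z_{p^3}\subset G$, so the existence of an incomparable pair does not by itself produce a degree-$3$ vertex on that chain. The conclusion (height $4$ plus maximum degree $\le 2$ forces $G\cong\mathbb Z_{p^4}$) is true, but proving it requires real work: one must show that on a maximal chain $\{e\}\subset A\subset B\subset C\subset G$ the degree bound forces $B\cong\mathbb Z_{p^2}$ and $C\cong\mathbb Z_{p^3}$, and then rule out any further proper subgroup $K$ (which would have to be a maximal subgroup of prime order meeting $C$ trivially) by a Sylow-type analysis --- or else fall back on a case-by-case check of all groups whose order has exactly four prime factors counted with multiplicity, which your write-up never does (your explicit scanning covers only heights $2$ and $3$, i.e.\ orders $p^2,pq,p^3,p^2q,pqr$). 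Note this is also exactly where the non-solvable groups of height $4$ such as $A_5$ must be disposed of, so the step cannot be waved away. Until the height-$4$ case is argued in full, the forward direction of (2) is incomplete.
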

To prove the above theorem, we start with the following result.

\begin{pro}\label{inclusion p1}
	Let $G$ be a group of order $p^{\alpha}$, where $p$ is a prime and $\alpha \geq 2$. Then
	\begin{enumerate}[\normalfont (1)]
		\item $girth(\mathcal I(G))\in \{3, 6, \infty \}$;
		\item $\mathcal I(G)$ is $K_{1,3}$-free if and only if $G$ is either $\mathbb Z_{p^{\alpha}}(\alpha=2, 3, 4)$ or $\mathbb Z_{p}\times \mathbb Z_{p}$.

%		$\mathbb Z_{p^{\alpha}q}(\alpha=1, 2)$, $\mathbb Z_{pqr}$, 
%		 $\mathbb Z_{q}\rtimes \mathbb Z_{p}$.
	\end{enumerate}
\end{pro}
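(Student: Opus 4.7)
The plan is to handle both parts by case analysis on $\alpha$, relying on two simple observations: a triangle in $\mathcal{I}(G)$ corresponds to a chain $H_1 \subset H_2 \subset H_3$ of three proper nontrivial subgroups, and $\mathcal{I}(G)$ is $K_{1,3}$-free iff every vertex has degree at most $2$, since three neighbors of any vertex immediately yield a (non-induced) $K_{1,3}$.

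For Part (1), when $\alpha = 2$ every proper nontrivial subgroup has order $p$, so $\mathcal{I}(G)$ is edgeless and the girth is $\infty$. When $\alpha \geq 4$, a chain of subgroups of orders $p, p^2, p^3$ exists and yields a triangle, so the girth is $3$. The key case is $\alpha = 3$: by Theorem~\ref{inclusion graph 2} the height of $L(G)$ is $3$, so $\mathcal{I}(G)$ is bipartite and contains no odd cycle. Also $\mathcal{I}(G)$ contains no $C_4$: any two distinct subgroups of order $p$ generate a unique subgroup of order $p^2$ (or the whole group), so they cannot both lie in two distinct subgroups of order $p^2$. Hence the girth is at least $6$, and a check through the five isomorphism classes of groups of order $p^3$ (using the descriptions from the proof of Theorem~\ref{incl 121} together with Figures~\ref{incl fig}(a) and (b)) shows that $\mathcal{I}(G)$ is either a tree or the star $K_{1,3}$, except for $G \cong \mathbb{Z}_p \times \mathbb{Z}_p \times \mathbb{Z}_p$, where a $C_6$ is exhibited by taking three linearly independent 1-dimensional subspaces $L_1, L_2, L_3$ and the three 2-dimensional subspaces $L_i + L_j$ they span pairwise.

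For Part (2), sufficiency is immediate: $\mathcal{I}(\mathbb{Z}_{p^\alpha}) \cong K_{\alpha - 1}$ has maximum degree $\alpha - 2 \leq 2$ for $\alpha \in \{2, 3, 4\}$, and $\mathcal{I}(\mathbb{Z}_p \times \mathbb{Z}_p)$ is edgeless. For necessity I will exhibit a vertex of degree at least $3$ in every remaining group. When $G \cong \mathbb{Z}_{p^\alpha}$ with $\alpha \geq 5$, $K_{\alpha - 1}$ has maximum degree $\alpha - 2 \geq 3$. When $G$ is non-cyclic of order $p^\alpha$ ($\alpha \geq 3$) and not elementary abelian, the Frattini subgroup $\Phi(G)$ is a proper nontrivial subgroup contained in every maximal subgroup of $G$; since non-cyclicity forces $G/\Phi(G)$ to be elementary abelian of rank at least $2$, there are at least $p + 1 \geq 3$ maximal subgroups, so $\Phi(G)$ has degree $\geq 3$ in $\mathcal{I}(G)$. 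When $G \cong \mathbb{Z}_p^\alpha$ with $\alpha \geq 3$ (elementary abelian), any 1-dimensional subspace is contained in $(p^{\alpha - 1} - 1)/(p - 1) \geq p + 1 \geq 3$ two-dimensional subspaces, again giving a vertex of degree $\geq 3$. The main obstacle is the case-by-case girth analysis for $\alpha = 3$, which requires separate examination of each non-abelian isomorphism type; once that is done, the Frattini-subgroup argument handles Part (2) uniformly across all non-cyclic groups.
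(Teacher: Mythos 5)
Your proposal is correct and reaches the stated classification, but it takes a genuinely different route from the paper in the necessity half of Part~(2). The paper proves that every group of order $p^{\alpha}$ outside the list contains $K_{1,3}$ by splitting into $\alpha\geq 4$ (two maximal subgroups $H_1,H_2$ with $|H_1\cap H_2|=p^{\alpha-2}$, and an order-$p$ subgroup inside the intersection serving as the centre of the claw) and $\alpha\leq 3$, where it runs through the full isomorphism classification of groups of order $p^3$ and reads the claw off explicit subgroup lists and Figure~\ref{incl fig}. Your observation that $K_{1,3}$-freeness (in the subgraph sense used here) is exactly the condition that every vertex has degree at most $2$, combined with the Frattini subgroup --- $\Phi(G)$ is a nontrivial proper subgroup lying under all $(p^d-1)/(p-1)\geq p+1\geq 3$ maximal subgroups whenever $G$ is non-cyclic and not elementary abelian --- handles all non-cyclic cases at once; together with the one-line subspace count for $\mathbb{Z}_p^{\alpha}$ this is cleaner and more scalable than the paper's case analysis. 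Your girth argument for $\alpha=3$ (bipartite by Theorem~\ref{inclusion graph 2}, and no $C_4$ because two distinct order-$p$ subgroups cannot both lie in two distinct order-$p^2$ subgroups) is the same device the paper uses, but you apply it uniformly rather than only to the two groups where the paper invokes it.

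One point to tighten: bipartite with no $C_4$ gives girth at least $6$, not girth in $\{6,\infty\}$ --- a priori the girth could be $8$ --- so your appeal to ``a check through the five isomorphism classes'' is genuinely load-bearing; for each group of order $p^3$ you must either exhibit a $C_6$ or show the graph is a forest. Be aware that the outcome you state (only $\mathbb{Z}_p\times\mathbb{Z}_p\times\mathbb{Z}_p$ yields a cycle) disagrees with the paper, which asserts that $(\mathbb{Z}_p\times\mathbb{Z}_p)\rtimes\mathbb{Z}_p$ contains the $6$-cycle $\langle a,b\rangle-\langle b\rangle-\langle b,c\rangle-\cdots$. The paper's cycle is spurious: with the given presentation $[c,b]=a$, so $\langle b,c\rangle=G$ is not a vertex, and in fact $\mathcal I$ of that group is a tree (every non-central order-$p$ subgroup lies in a unique maximal subgroup, since two distinct maximal subgroups meet exactly in the centre). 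So your version of the check is the correct one, but you should record that argument explicitly rather than lean on the paper's descriptions, which would contradict it; either way the conclusion of Part~(1) is unaffected, since a graph of girth at least $6$ that is a forest or contains a $C_6$ has girth in $\{6,\infty\}$.
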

\begin{proof} Proof is divided in to two cases.
	
	\noindent \textbf{Case 1:}	 Let $\alpha \geq 4$. Then $G$ has a chain of subgroups of length at least four and so $\mathcal I(G)$ contains $C_3$ as a subgraph. Hence $girth(\mathcal I(G))=3$.  If  $G\cong \mathbb Z_{p^{\alpha}}$, then by Theorem~\ref{inclusion graph 112},  $\mathcal I(G)$ is  $K_{1,3}$-free only when $\alpha = 2,3,4$. If $G\ncong \mathbb Z_{p^\alpha}$, then $G$ has at least two subgroups of order $p^{\alpha-1}$, let them be $H_1$, $H_2$. Also since $|H_1\cap H_2|=p^{\alpha_1-2}$, $H_1\cap H_2$ has a subgroup of order $p_1$, let it be $H_3$. It follows that $\mathcal I(G)$ contains $K_{1,3}$ as a subgraph with bipartition $X:=\{H_3\}$ and $Y:=\{H_1$, $H_2$, $H_1\cap H_2\}$. 
	
	\noindent \textbf{Case 2:} If $\alpha\leq 3$, then $G$ is isomorphic to one of $\mathbb Z_{p^{\alpha}}$, $\mathbb Z_p\times \mathbb Z_p$, $\mathbb Z_{p^2}\times \mathbb Z_{p}$, $\mathbb Z_{p}\times \mathbb Z_{p}\times \mathbb Z_{p}$, $Q_8$, $M_8$, $M_{p^3} (p >2)$ or $(\mathbb Z_{p}\times \mathbb Z_{p})\rtimes \mathbb Z_{p}$.
	\begin{enumerate}[\normalfont (i)]
%		\item  If $G\cong \mathbb Z_{p^{\alpha}}$, then by Theorem~\ref{inclusion graph 112},  $\mathcal I(G)$ is $K_{\alpha_1-1}$. It follows that $girth(\mathcal I(G))=\infty$ and $\mathcal I(G)$ is $K_{1,3}$-free.
%		
%		\item  If $G\cong \mathbb Z_{p}\times \mathbb Z_{p}$, then by Corollary~ \ref{inclusion graphs 3}(1), $\mathcal I(G)$ is totally disconnected. It follows that $girth(\mathcal I(G))=\infty$ and $\mathcal I(G)$ is $K_{1,3}$-free.
		
%		\item If $G\cong \mathbb Z_{p^2}\times \mathbb Z_{p}$, then by Figure~\ref{incl fig1}, $\mathcal I(G)$ contains $K_{1,3}$ and $girth(\mathcal I(G))=\infty$.
		
		\item If $G\cong \mathbb Z_{p}\times \mathbb Z_{p}\times \mathbb Z_{p}:=\langle a, b, c~|~a^{p}=b^{p}=c^{p}=1, ab=ba, ac=ca, bc=cb\rangle$,  
		then $\mathcal I(G)$ contains $K_{1,3}$ as a subgraph with bipartition $X:=\{\langle a,b\rangle\}$ and $Y:=\{\langle a\rangle$, $\langle b\rangle$, $\langle ab\rangle\}$. By Case 2 in the proof of Theorem~\ref{incl 121}, we proved that $\mathcal I(G)$ contains $C_6$ as a subgraph. Also by Corollary~\ref{inclusion graphs 3}(2), $\mathcal I(G)$ is bipartite and so $girth(\mathcal I(G))=4$ or 6. Suppose that $\mathcal I(G_2)$ contains $C_4$ as a subgraph. Let it be $H_1 -H_2- H_3- H_4-H_1$.  Then either $H_1$, $H_3 \subset H_2$, $H_4$ or $H_2$, $H_4 \subset H_1$, $H_3$. Without loss of generality, we may assume that $H_1$, $H_3\subset H_2$, $H_4$. Then we must have $|H_1|=p=|H_3|$, $|H_2|=p^2=|H_4|$ and this implies that $\langle H_1, H_2\rangle=H_2=H_4$, which is not possible. so $girth(\mathcal I(G))= 6$.
		
%		\item 	If $G\cong Q_8$, then by~\eqref{e2}, $\mathcal I(G)$ contains $K_{1,3}$ as a subgraph and $girth(\mathcal I(G))=\infty$.
%		
%		\item 	If $G\cong M_8$, then by Figure~\ref{incl fig2}. $\mathcal I(G)$ 
%		contains $K_{1,3}$ as a subgraph and $girth(\mathcal I(G))=\infty$.
%		
%		\item 	If $G\cong M_{p^3}$, then by Figure~\ref{incl fig1}, $\mathcal I(G)$ contains $K_{1,3}$ as a subgraph and $girth(\mathcal I(G))=\infty$.
		
		\item 	If $G\cong \mathbb (Z_{p}\times \mathbb Z_{p})\rtimes \mathbb Z_{p}:=\langle a, b, c~|~a^{p}=b^{p}=c^{p}=1, ab=ba, ac=ca, cbc^{-1}=ab\rangle$, 
		then $\mathcal I(G)$ contains $K_{1,3}$ as a subgraph with bipartition $X:=\{\langle a,b\rangle\}$ and $Y:=\{\langle a\rangle$, $\langle b\rangle$, $\langle ab\rangle\}$. In Case 2 in the proof of Theorem~\ref{incl 121}, we already proved that $\mathcal I(G)$ contains $C_6$ as a subgraph. Also by Corollary~\ref{inclusion graphs 3}(2), $\mathcal I(G)$ is bipartite and so $girth(\mathcal I(G))=4$ or 6. As in part (i) of this Case, one can easily show that $girth(\mathcal I(G))=6$. 
		
		\item By Theorem~\ref{inclusion graph 112}, Corollary~\ref{inclusion graphs 3}(1), Figures~\ref{incl fig}(a),~\ref{incl fig}(b) and~\eqref{incle2}, we see that the girth of the inclusion graph of subgroups of all the remaining groups is infinity and except $\mathbb Z_{p^2}\times \mathbb Z_{p}$, they contains $K_{1,3}$ as a subgraph. 
		\end{enumerate} 		
	% Since by Theorem~\ref{inclusion graphs 6}, $\mathcal I(G)$ is bipartite and so $girth(\mathcal I(G))=4$ or 6. Suppose $girth(\mathcal I(G))=4$, then there exist a subgroups in $G$, say $H_1$, $H_2$, $H_3$, $H_4$ of orders $p_1^2$, $p_1^2$, $p_1$, $p_1$ respectively such that $H_3$, $H_4$ are subgroups of $H_1$, $H_2$. But this is not possible. So $girth(\mathcal I(G))=6$.
	Proof follows from the above two cases.
\end{proof}

	\begin{pro}\label{inclusion p2}
		Let $G$ be a group of order $p^{\alpha}q^{\beta}$, where $p$, $q$ are distinct primes and $\alpha, \beta \geq 1$. Then
		\begin{enumerate}[\normalfont (1)]
			\item $girth(\mathcal I(G))\in \{3, 6, \infty \}$;
			\item $\mathcal I(G)$ is $K_{1,3}$-free if and only if $G$ is either  $\mathbb Z_{pq}$ or $\mathbb Z_{p^2q}$.
		\end{enumerate}
	\end{pro}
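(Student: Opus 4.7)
The plan is to split on $\alpha+\beta$, using that any $\{p,q\}$-group is solvable (Burnside), so every maximal chain in $L(G)$ has length $\alpha+\beta$ and the height of $L(G)$ equals $\alpha+\beta$. Theorems~\ref{inclusion graph 3} and~\ref{inclusion graph 2} then give that $\mathcal I(G)$ is totally disconnected when $\alpha+\beta=2$, bipartite when $\alpha+\beta=3$, and contains $C_3$ when $\alpha+\beta\ge 4$.

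For part (1), when $\alpha+\beta\ge 4$ a maximal chain in $L(G)$ provides three proper subgroups $H_1\subsetneq H_2\subsetneq H_3$ forming a triangle in $\mathcal I(G)$, so $girth(\mathcal I(G))=3$. When $\alpha+\beta=2$ the graph is edgeless, so $girth(\mathcal I(G))=\infty$. The substantive case is $\alpha+\beta=3$, say $|G|=p^2q$; here vertices have orders in $\{p,q,p^2,pq\}$ and the graph is bipartite between $\{p,q\}$-orders and $\{p^2,pq\}$-orders, so every cycle has even length. I would rule out $C_4$ by a short index argument: in any hypothetical $C_4=H_1H_2H_3H_4H_1$ with $H_1\ne H_3$ small and $H_2\ne H_4$ large (and hence non-comparable), $|H_2\cap H_4|$ is a proper divisor of $\min(|H_2|,|H_4|)$ and so lies in $\{1,p,q\}$; but such an intersection cannot contain two distinct non-trivial subgroups, forcing $H_1=H_3$. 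Hence $girth(\mathcal I(G))\in\{6,\infty\}$, and the enumeration in the proof of Theorem~\ref{incl 121} pinpoints which order-$p^2q$ groups contain the canonical hexagon $\langle a,b\rangle-\langle b\rangle-\langle b,c\rangle-\langle c\rangle-\langle a,c\rangle-\langle a\rangle-\langle a,b\rangle$ (girth $6$) and which produce a forest (girth $\infty$).

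For part (2), by Theorem~\ref{inclusion graph 13} the Sylow $p$- and $q$-subgroups $P,Q$ of $G$ are proper and $\mathcal I(P),\mathcal I(Q)$ embed in $\mathcal I(G)$; claw-freeness of $\mathcal I(G)$ therefore forces, via Proposition~\ref{inclusion p1}, that $P,Q\in\{\mathbb Z_{r^\gamma}:\gamma\le 4\}\cup\{\mathbb Z_r\times\mathbb Z_r\}$, and any $\mathbb Z_r\times\mathbb Z_r$ sitting inside $G$ itself would supply a claw (its $r+1\ge 3$ order-$r$ subgroups are pairwise non-comparable neighbors of $\mathbb Z_r\times\mathbb Z_r$), so both $P$ and $Q$ must be cyclic. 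For each remaining $(\alpha,\beta)\notin\{(1,1),(2,1),(1,2)\}$ I would exhibit a claw directly in $\mathcal I(G)$, while for the exceptional orders $pq$ and $p^2q$ the inclusion graphs have already been computed in Theorem~\ref{incl 121}: $\mathcal I(\mathbb Z_{pq})$ is edgeless and $\mathcal I(\mathbb Z_{p^2q})\cong P_3$, both $K_{1,3}$-free, and every other group of order $p^2q$ reveals a claw via Figures~\ref{incl fig}(c)--(f) and equations~(\ref{e17}),~(\ref{e8}) (for instance, in $\mathbb Z_{pq}\times\mathbb Z_p$ the unique order-$p^2$ subgroup has as neighbors its $p+1\ge 3$ pairwise non-comparable order-$p$ subgroups).

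The principal obstacle is producing a claw in each group of order $p^\alpha q^\beta$ with $\alpha+\beta\ge 4$ whose Sylow subgroups are both cyclic, in particular in $\mathbb Z_{p^\alpha q^\beta}$ itself, whose subgroup lattice is the $(\alpha+1)\times(\beta+1)$ grid; identifying a vertex with three pairwise incomparable neighbors in this grid requires a careful case analysis by $(\alpha,\beta)$ that mirrors the height-$3$ bookkeeping of Theorem~\ref{incl 121} but now scaled up, and this is the step that does the real work in the argument.
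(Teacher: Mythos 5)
Your part (1) is essentially sound and, for the key case $|G|=p^2q$, your $C_4$-exclusion is cleaner than the paper's: the paper checks the possible order patterns of $H_1,\dots,H_4$ one by one, whereas you observe uniformly that both ``small'' vertices of a $C_4$ would lie in $H_2\cap H_4$, whose order is $1$ or a prime and which therefore cannot contain two distinct nontrivial subgroups. (One quibble: it is not true that \emph{every} maximal chain in $L(G)$ has length $\alpha+\beta$ --- $1<\mathbb Z_3<A_4$ is maximal of length $2$ --- but a \emph{longest} chain does, which is all your triangle argument needs.) Like the paper, you then lean on the group-by-group enumeration of Theorem~\ref{incl 121} to decide, for each group of order $p^2q$, whether a $C_6$ occurs (girth $6$) or the graph is a forest (girth $\infty$); that is legitimate and is exactly what the paper's Subcases 2a--2b do.

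Part (2) has a genuine gap. Your reduction --- claw-freeness of $\mathcal I(G)$ forces both Sylow subgroups to be cyclic of order at most $r^4$, since by Proposition~\ref{inclusion p1} a non-cyclic claw-free Sylow subgroup would have to be $\mathbb Z_r\times\mathbb Z_r$, which, being proper in $G$, is itself the centre of a claw --- is correct and even disposes of cases the paper treats by hand (e.g.\ $S_4$, whose Sylow $2$-subgroup $D_8$ already carries a claw). But the remaining assertion, that every group of order $p^\alpha q^\beta$ with $\alpha+\beta\ge 4$ and both Sylow subgroups cyclic contains a claw, is announced rather than proved, and you have mislocated the difficulty: the cyclic groups you single out are the easy case (taking $\alpha\ge 2$, the subgroup of order $p^2q$ is proper and contains subgroups of orders $p$, $p^2$, $q$, giving a $K_{1,3}$), whereas the non-abelian metacyclic groups are where the work lies. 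The paper handles these via solvability and Sylow theory: for $\alpha,\beta\ge 2$ it takes a normal subgroup $H$ of prime index and finds subgroups of orders $p$, $p^2$, $q$ inside $H$; for $\alpha=3$, $\beta=1$ it uses Sylow counting to write $G$ as $P\rtimes\mathbb Z_q$ or $\mathbb Z_q\rtimes P$ with $P\cong\mathbb Z_{p^3}$ and exhibits an explicit claw such as $\langle a^p,b\rangle$ over $\langle a^p\rangle,\langle a^{p^2}\rangle,\langle b\rangle$. Some such argument must be supplied before your proof is complete. (A further small point: a totally disconnected graph is claw-free, so $\mathbb Z_q\rtimes\mathbb Z_p$ belongs in the claw-free list as well; the paper's own Case~1 establishes this even though the proposition's statement omits it, and your sketch inherits the same omission.)
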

	\begin{proof} Proof is divided in to several cases.
		
	\noindent \textbf{Case 1:}	 If $\alpha =1=\beta$, then $G\cong \mathbb Z_{pq}$ or $\mathbb Z_{p}\rtimes \mathbb Z_{q}$. By Corollary~ \ref{inclusion graphs 3}(1), $\mathcal I(G)$ is totally disconnected and so $\mathcal I(G)$ is $K_{1,3}$-free and $girth(\mathcal I(G))=\infty$.
	
	\noindent \textbf{Case 2:} If $\alpha=2$, $\beta =1$, then we need to consider the following subcases.
	
	\noindent \textbf{Subcase 2a:} Let $G$ be abelian. If $G\cong \mathbb Z_{p^2q}$, then by~\eqref{e4}, $\mathcal I(G)$ is $K_{1,3}$-free and $girth(\mathcal I(G))=\infty$. If $G\cong \mathbb Z_{pq}\times \mathbb Z_{p}$, then by Figure~\ref{incl fig}(c), $\mathcal I(G)$ has $K_{1,3}$ as a subgraph and $girth(\mathcal I(G))=6$. 
	
	\noindent \textbf{Subcase 2b:}
	 Let $G$ be non-abelian. We proceed with the groups considered in Subcase 3b in the proof of Theorem~\ref{incl 121}.
	\begin{enumerate}[\normalfont (i)]
%		\item By Figure~\ref{incl fig4}, 
%		$\mathcal I(G_1)$ contains $K_{1,3}$ as a subgraph and $girth(\mathcal I(G_1))$ is infinity.
		
		\item 	Since $\mathbb Z_p\times \mathbb Z_p$ 
		is a subgroup of $G_2$, so $\mathbb Z_p\times \mathbb Z_p$ together with its proper subgroups forms $K_{1,3}$ as a subgraph of $\mathcal I(G_2)$.
		Moreover, we already proved that $\mathcal I(G_2)$ contains $C_6$ as a subgraph.  By Corollary~\ref{inclusion graphs 3}(2), $\mathcal I(G_2)$ is bipartite and so $girth(\mathcal I(G_2))=4$ or 6. Suppose that $\mathcal I(G_2)$ contains $C_4$ as a subgraph. Let it be $H_1 -H_2- H_3- H_4-H_1$.  Then either $H_1$, $H_3 \subset H_2$, $H_4$ or $H_2$, $H_4 \subset H_1$, $H_3$. Without loss of generality we may assume that $H_1$, $H_3\subset H_2$, $H_4$.
		\begin{itemize}
			\item If $|H_1|=p_1=|H_3|$, $|H_2|=p_1^2=|H_4|$, then $\langle H_1, H_2\rangle=H_2=H_4$, which is not possible.
			\item If $|H_1|=p_1=|H_3|$, $|H_2|=p_1^2$, $|H_4|=p_1p_2$, then $H_3\nsubseteq H_4$ if $H_1\subset H_4$; $H_1\nsubseteq H_4$,  if $H_3\subset H_4$, which are not possible.
			\item If $|H_1|=p_1=|H_3|$, $|H_2|=p_1p_2=|H_4|$, then also we get a contradiction as above.
			\item If $|H_1|=p_1$, $|H_3|=p_2$, $|H_2|=p_1p_2=|H_4|$, then  $\langle H_1, H_2\rangle=H_2=H_4$, which is not possible. 
		\end{itemize}
		
		Hence $girth(\mathcal I(G_2))=6$.
		
%		\item By Figure~\ref{incl fig5}, $\mathcal I(G_3)$ contains $K_{1,3}$ as a subgraph and $girth(\mathcal I(G_3))$ is infinity.
%		
%		\item By Figure~\ref{incl fig6}, 
%		$\mathcal I(G_4)$ contains $K_{1,3}$ as a subgraph and $girth(\mathcal I(G_4))$ is infinity.
		
		\item We already proved that $\mathcal I(G_{5(t)})$ contains $C_6$ as a subgraph. Also $\mathbb Z_p\times \mathbb Z_p$ is a subgroup of $G_{5(t)}$, so $\mathbb Z_p\times \mathbb Z_p$ together with its proper subgroups forms $K_{1,3}$ as a subgraph of $\mathcal I(G_{5(t)})$. By Corollary~\ref{inclusion graphs 3}(2), $\mathcal I(G_{5(t)})$ is bipartite and so $girth(\mathcal I(G))=4$ or 6. As in part (i) of this Subcase, one can easily show that $girth(\mathcal I(G))=6$.

%		\item  By $\eqref{e17}$, $K_{1,3}$ is a subgraph of $\mathcal I(G_6)$ and $girth(\mathcal I(G_6))=\infty$	

%		\item By~\eqref{e8}, $\mathcal I(A_4)$ contains $K_{1,3}$ as a subgraph and $girth(\mathcal I(A_4))=\infty$.

	\item	By Figures~\ref{incl fig}(d),~\ref{incl fig}(e),~\ref{incl fig}(f),~$\eqref{e17}$,~\eqref{e8}, we see that the girth of the inclusion graph of subgroups of all the remaining groups is infinity and  they contains $K_{1,3}$ as a subgraph. 
	\end{enumerate}

%	Note that if $(p,q)=(2,3)$, then Subcase 1 and 2 are not mutually exclusive. Up to isomorphism, there are three non-abelian groups of order 12; 
%	$\mathbb Z_3\rtimes \mathbb Z_4$, $D_{12}$ and $A_4$. Here we already deal with $\mathbb Z_3\rtimes \mathbb Z_4$, $D_{12}$ as in Subcase 1b. But for $A_4$,
%	
	
	\noindent \textbf{Case 3:} Let $\alpha \geq 3$ and $\beta =1$. Then $G$ has a chain of subgroups of length at least four and so $girth(\mathcal I(G))=3$. If $G$ is cyclic, then let $H_1$, $H_2$, $H_3$, $H_4$ be the subgroups of $G$ of orders $p$, $q$, $p^2$, $p^2q$ 
	respectively. Then $\mathcal I(G)$ contains $K_{1,3}$ as a subgraph with bipartition $X:=\{H_4\}$ and $Y:=\{H_1, H_2, H_3\}$. If $G$ is non-cyclic abelian, then $\mathbb Z_{pq}\times \mathbb Z_{p}$ 
	is a proper subgroup of $G$, so by Subcase 2a, $\mathcal I(G)$ contains $K_{1,3}$ as a subgraph. 
	
Now we assume that $G$ is non-abelian. Let $P$ denote a Sylow $p$-subgroups of $G$. We shall prove that $\mathcal I(G)$ contains $K_{1,3}$ as a subgraph. First 
	let $\alpha_1=3$. If $p>q$, then $n_{p}(G)=1$, by Sylow's theorem and our group $G \cong P \rtimes \mathbb Z_{q}$. Suppose $\mathcal I(P)$ is contains $K_{1,3}$,  
	then $\mathcal I(G)$ also contains $K_{1,3}$, so it is enough to consider the case when $\mathcal I(G)$ is $K_{1,3}$-free. 
	By Proposition~\ref{inclusion p1},
	we must have $P \cong \mathbb Z_{p^3}$. 
	Then $G \cong \mathbb Z_{p^3} \rtimes \mathbb Z_{q}:=\langle a, b~|~a^{p^3}=b^q=1, bab^{-1}=a^i, ord_{p^3}(i)=q\rangle$ and $\mathcal I(G)$ contains $K_{1,3}$ as a subgraph with bipartition 
	$X:=\{\langle a^p, b \rangle\}$ and $Y:=\{\langle a^p \rangle, \langle a^{p^2} \rangle, \langle b \rangle\}$.
	
	Now, let us consider the case $p< q$ and $(p,q)\neq (2$, $3)$. Here $n_{q}(G) = p$ is not possible. If $n_{q}(G)= p^{2}$, then $q| (p+1)(p-1)$ which implies that
	$q| (p+1)$ or $q| (p-1)$. But this is impossible, since $q> p > 2$. If $n_{q}(G)= p^3$, then there are $p^{3}(q-1)$ elements of order $q$.
	However, this only leaves $p^{3}q-p^{3}(q-1)= p^3$ elements, and the Sylow p-subgroup must be normal, a case we already considered. Therefore,
	the only remaining possibility is that $G\cong \mathbb Z_q \rtimes P$. Suppose $\mathcal I(P)$ contains $K_{1,3}$,  
	then so is $\mathcal I(G)$. So it is enough to consider the case when $\mathcal I(G)$ is $K_{1,3}$-free. 
	By Proposition~\ref{inclusion p1}, 
	we must have $P \cong \mathbb Z_{p^3}$. Then $G\cong \mathbb Z_q\rtimes \mathbb Z_{p^3}=\langle a,b~|~a^q=b^{p^3}=1, bab^{-1}=a^i, \mbox{ord}_q(i)=p\rangle$, $p~|~(q-1)$ and $\mathcal I(G)$ contains $K_{1,3}$ as a subgraph with 
	bipartition $X:=\{\langle a, b^p \rangle\}$ and $Y:=\{\langle b^{p^2} \rangle, \langle b^p \rangle, \langle a \rangle\}$.
	
	If $(p,q)= (2, 3)$, then \cite[p.160]{burn} states that the only group of order 24 that is not a semi-direct product is $S_4$. Also $A_4$ is a subgroup of $S_4$. 
	So by \eqref{e8}, $\mathcal I(A_4)$ contains $K_{1,3}$ as a subgraph. Thus the result is true when $\alpha =3$.
	
 Next, let $\alpha_1>3$. Then $P$ has  a chain of subgroups of length  at least 4 and so $\mathcal I(G)$ has $K_4$ as a subgraph. This implies that $\mathcal I(G)$ has $K_{1,3}$ as a subgraph. 
	
	\noindent \textbf{Case 4:} Let $\alpha_1$, $\beta\geq 2$. Since $G$ is solvable, so it has a normal subgroup with prime index, say $p_2$, let it be $H$.  Let $H_1$, $H_2$, $H_3$ be subgroups of $H$  of order $p$, $p^2$, $q$ respectively, such that $ H_1 \subset H_2$.  Then $H, H_1, H_2$ forms $C_3$ as a subgraph of $\mathcal I(G)$ and so $girth(\mathcal I(G))=3$. Also, $\mathcal I(G)$ contains $K_{1,3}$ as a subgraph with bipartition $X:=\{H\}$ and $Y:=\{H_1$, $H_2$, $H_3\}$. 
\end{proof}

\begin{pro}\label{incl 1}
		Let $G$ be a solvable group whose order has at least three distinct prime factors. Then
		\begin{enumerate}[\normalfont (1)]
			\item $girth(\mathcal I(G))\in \{3, 6\}$;
			\item $\mathcal I(G)$ is $K_{1,3}$-free if and only if $G \cong \mathbb Z_{pqr}$, where $p$, $q$, $r$ are distinct primes.
		\end{enumerate}
\end{pro}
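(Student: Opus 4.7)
The plan is to write $|G|=p_1^{\alpha_1}\cdots p_k^{\alpha_k}$ with $k\geq 3$ and exploit P.~Hall's theorem: since $G$ is solvable, for every $\pi\subseteq\{p_1,\ldots,p_k\}$ there is a Hall $\pi$-subgroup of $G$. I write $P_i$ for a Sylow $p_i$-subgroup, $H_{ij}$ for a Hall $\{p_i,p_j\}$-subgroup, and (when $k\geq 4$) $H_{ijl}$ for a Hall $\{p_i,p_j,p_l\}$-subgroup; each of these is proper in $G$ exactly when the corresponding prime set omits some $p_m$.

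For part (1) I would split according to whether $|G|=pqr$ or not. If $k=3$ and all $\alpha_i=1$, the six distinct subgroups $P_1,H_{12},P_2,H_{23},P_3,H_{13}$ form the cycle $P_1-H_{12}-P_2-H_{23}-P_3-H_{13}-P_1$ in $\mathcal I(G)$. Since the height of $L(G)$ is $3$, Theorem~\ref{inclusion graph 2} makes $\mathcal I(G)$ bipartite, killing $C_3$; a short case analysis of a would-be $C_4$, say $A-B-C-D-A$ (in which $A,C$ have prime order and $B,D$ have order $p_ip_j$), forces $\langle A,C\rangle=B=D$ and thus excludes $C_4$ as well. Hence $girth(\mathcal I(G))=6$. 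In every other situation I will produce an explicit chain of three proper subgroups, which immediately yields a triangle: if $k\geq 4$, use $P_1\subsetneq H_{12}\subsetneq H_{123}$; if $k=3$ but some $\alpha_i\geq 2$, pick $Q\subsetneq P_i$ with $|Q|=p_i$ and use $Q\subsetneq P_i\subsetneq H_{ij}$. Hence $girth(\mathcal I(G))=3$ in these cases.

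For part (2), sufficiency is immediate since $\mathcal I(\mathbb Z_{pqr})\cong C_6$ has every vertex of degree $2$. For necessity I assume $G\not\cong\mathbb Z_{pqr}$ and split into three subcases. If $k\geq 4$, the proper subgroup $H_{123}$ contains its own Sylow subgroups of pairwise coprime orders, which are pairwise incomparable, producing a claw centered at $H_{123}$. If $k=3$ with some $\alpha_i\geq 2$, then either some $H_{ij}$ is not of the form $\mathbb Z_{pq}$ or $\mathbb Z_{p^2q}$---in which case Proposition~\ref{inclusion p2} already supplies a claw inside $\mathcal I(H_{ij})\subseteq\mathcal I(G)$---or every $H_{ij}$ is cyclic, which forces all Sylow subgroups to commute pairwise and $G\cong\mathbb Z_{p_1^{\alpha_1}p_2^{\alpha_2}p_3^{\alpha_3}}$ with $|G|>p_1p_2p_3$; then the proper subgroup $\mathbb Z_{p_1p_2p_3}$ sits above its three incomparable prime-order subgroups and produces a claw. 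Finally, if $k=3$ with all $\alpha_i=1$ and $G\ncong\mathbb Z_{pqr}$, then $G$ is non-cyclic, so some $H_{ij}$ must be non-cyclic (otherwise all Sylow subgroups would commute and $G$ would be $\mathbb Z_{pqr}$); such an $H_{ij}\cong\mathbb Z_{p_j}\rtimes\mathbb Z_{p_i}$ contains one Sylow $p_j$-subgroup together with $p_j\geq 3$ Sylow $p_i$-subgroups, giving at least four pairwise incomparable prime-order subgroups of $H_{ij}$ and hence a claw centered at $H_{ij}$.

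The step I expect to be most delicate is the exclusion of $C_4$ for $|G|=pqr$. It rests on the structural fact that in a squarefree-order group two distinct subgroups of the same $p_ip_j$-order cannot share two distinct prime-order subgroups, because those two already generate the full $p_ip_j$-order subgroup. Once this is established, the remainder of the argument is essentially bookkeeping with Hall subgroups and a pointer back to Proposition~\ref{inclusion p2}.
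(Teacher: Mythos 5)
Your proposal is correct and follows essentially the same route as the paper: use solvability to obtain Hall subgroups, get $girth=6$ in the squarefree three-prime case via the Sylow-basis hexagon together with bipartiteness and the $\langle A,C\rangle=B=D$ exclusion of $C_4$, and otherwise exhibit an explicit chain of three proper subgroups for a triangle and a claw centered at a proper Hall subgroup (with the $K_{1,3}$-freeness characterization reduced to Proposition~\ref{inclusion p2} and the structure of non-cyclic squarefree groups). The only divergence is cosmetic: for $k=3$ with some $\alpha_i\geq 2$ the paper constructs the claw directly inside a Hall subgroup of order $p_1^{\alpha_1}p_2$, whereas you run a slightly longer dichotomy on whether all $H_{ij}$ are cyclic.
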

\begin{proof} Let $|G|=p_1^{\alpha_1}p_2^{\alpha_2}\cdots p_k^{\alpha_k}$, where $p_i$'s are distinct primes, $k \geq 3$, $\alpha_i \geq 1$.
	
	\noindent \textbf{Case 1:} Let  $k=3$. If $\alpha_1=\alpha_2=\alpha_3=1$, then in Case 4 in the proof of Theorem~\ref{incl 121}, we already proved that $\mathcal I(G)$ contains $C_6$ as a subgraph. Also by Corollary~\ref{inclusion graphs 3}(2), $\mathcal I(G)$ is bipartite and so $girth(\mathcal I(G))=4$ or 6. 
	By using a similar argument as in (i) of Subcase 2b in the proof of  Proposition~\ref{inclusion p2}, one can easily see that $girth(\mathcal I(G))=6$. If $G\cong \mathbb Z_{pqr}$, then by Theorem~\ref{incl 121}(1), $\mathcal I(G)$ is $K_{1,3}$-free. If $G$ is non-abelian, then  $G$ has a non-cyclic subgroup of composite order, let it be $H$. It follows that $H$ together with its proper subgroups forms $K_{1,3}$ as a subgraph of $\mathcal I(G)$.
Now let $\alpha_1\geq 2$, $\alpha_2$, $\alpha_3\geq 1$. Since $G$ is solvable, so it has a subgroup of order $p_1^{\alpha_1}p_2$, let it be $H$.	 
%	 Sylow basis containing $P_1$, $P_2$, $P_3$, where $P_1$, $P_2$, $P_3$ are Sylow $p_1$, $p_2$, $p_3$-subgroups of $G$ respectively and so $P_1P_2$ is a proper subgroup of $G$. Here $P_1P_2$ has a subgrSo $G$ has a chain of length at least two and so $girth(\mathcal I(G))=3$.
	 	  Let $H_1$, $H_2$, $H_3$ be subgroups of $H$ of order $p_1$, $p_1^{\alpha_1}$, $p_2$ respectively with $H_1\subset H_2$. It follows that $girth(\mathcal I(G))=3$ and $\mathcal I(G)$ contains $K_{1,3}$ as a subgraph with bipartition $X:=\{H\}$ and $Y:=\{H_1$, $H_2$, $H_3\}$.

	\noindent \textbf{Case 2:} $k\geq 4$. Since $G$ is solvable, so it has a subgroup of order $p_1^{\alpha_1}p_2^{\alpha_2}p_3^{\alpha_3}$, let it be $H$. Let $H_1$, $H_2$, $H_3$ be subgroups of $H$ of order $p_1$, $p_1^{\alpha_1}p_2$, $p_2$ respectively with $H_1\subset H_2$.  It follows that $girth(\mathcal I(G))=3$ and $\mathcal I(G)$ contains $K_{1,3}$ as a subgraph with bipartition $X:=\{H\}$ and $Y:=\{H_1$, $H_2$, $H_3\}$. 
	
	Proof follows from the above two cases.	
%	
%	
%	Since $G$ is solvable, so it has a Sylow basis containing $P_i$, $i=1$, 2, $\ldots$, $k$, where $P_i$ is a Sylow $p_i$-subgroups of $G$, $i=1$, 2, $\ldots$, $k$ and so $P_1P_2P_3$ is also a subgroup of $G$. Now by the above Case 3, $girth(\mathcal I(G))=3$ and $\mathcal I(G)$ contains $K_{1,3}$ as a subgraph.
\end{proof}

%\subsection{Non-solvable group}\label{non}

%\begin{cor}\label{inclusion garphs c3}
%	If $\mathcal I(G/N)$ contains a cycle of length $r$, then $\mathcal I(G)$ contains a cycle of length atleast $r$.
%\end{cor}

%A non-abelian group is said to be a minimal simple group, If all of its proper subgroups are solvable.

It is well known that any non-solvable group has a simple group as a sub-quotient and every simple group has a minimal simple group as a sub-quotient. 
So if we can show that the inclusion graph of subgroups of a minimal simple group contains a graph $X$ as a subgraph, then by Theorem~\ref{inclusion graph 13}, the inclusion graph of subgroups of a non-solvable group also contains $X$.

Recall that $SL_m(n)$ is the group of $m \times m$ matrices having determinant 1, whose entries are lie in a field with $n$ elements and that 
$L_m(n)=SL_m(n)/ H$, where $H=\{kI| k^m=1 \}$. For any prime $q > 3$, the Suzuki group is denoted by $Sz(2^q)$. For any integer $n\geq 3$, the  dihedral group of order $2n$ is given by $D_{2n}=\langle a, b~|~a^n=b^2=1, ab=ba^{-1}\rangle$. 
Note that $K_{1,3}$ is a subgraph of $\mathcal I(D_{4n})$ with bipartition $X:=\{\langle a^2, b\rangle\}$ and 
$Y:=\{\langle a^2\rangle, \langle a^2b\rangle, \langle b\rangle \}$. 
%The classification of minimal simple groups is given in the following result.
%
%\begin{thm}\label{inclusion graph t10}
%	A finite group is a minimal simple group if and only if it is isomorphic to one of the following.
%	
%	\begin{enumerate}
%		\item $L_2(2^{p})$, where $p$ is any prime
%		\item $L_2(3^{p})$, where  $p$ is an odd prime
%		\item $ L_3(3)$
%		\item $L_2(p)$, where $p^2 \equiv -1 (\text{mod}~ 5)$ and  $p> 3$
%		\item $Sz(2^q)$ with $q \geq 3$ and odd.
%	\end{enumerate}
%\end{thm}

\begin{lemma}\label{inclusion graph l1}
	$\mathcal I(D_{4n})$ contains $K_{1,3}$ as a subgraph, for $n \geq 3$.
\end{lemma}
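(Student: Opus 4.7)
The plan is to exhibit the claw explicitly: take the proposed center subgroup $\langle a^2, b\rangle$ together with the three leaves $\langle a^2\rangle$, $\langle a^2 b\rangle$, $\langle b\rangle$ already suggested in the remark preceding the lemma, and verify (i) each is a proper subgroup of $D_{4n}$, (ii) the three leaves are pairwise distinct (this is where the hypothesis $n\geq 3$ will be used), and (iii) each leaf is properly contained in $\langle a^2, b\rangle$, so that each leaf is adjacent to the center in $\mathcal I(D_{4n})$.

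Concretely, working in the presentation $D_{4n}=\langle a, b \mid a^{2n}=b^2=1,\ ab=ba^{-1}\rangle$, I would first note that $\langle a^2\rangle$ is cyclic of order $n$, and that since $b\,a^2\,b^{-1}=a^{-2}\in \langle a^2\rangle$, the subgroup $\langle a^2,b\rangle$ is a copy of $D_{2n}$ of order $2n$; in particular it is proper in $D_{4n}$ (index $2$). Next, using $ab=ba^{-1}$ one computes $(a^2b)^2 = a^2 (ba^2) b = a^2 a^{-2} b^2 = 1$, so $\langle a^2 b\rangle$ has order $2$; likewise $\langle b\rangle$ has order $2$. All three of $\langle a^2\rangle$, $\langle a^2 b\rangle$, $\langle b\rangle$ are clearly contained in $\langle a^2,b\rangle$.

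The only point that needs care is distinctness of the three leaves. The subgroup $\langle a^2\rangle$ has order $n$, whereas $\langle b\rangle$ and $\langle a^2 b\rangle$ have order $2$, so for $n\geq 3$ we have $\langle a^2\rangle\neq \langle b\rangle,\langle a^2 b\rangle$. For the remaining pair, $\langle b\rangle=\{1,b\}$ and $\langle a^2 b\rangle=\{1, a^2 b\}$ coincide only if $a^2 b = b$, i.e.\ $a^2=1$, which would force $2n\mid 2$; this fails for $n\geq 2$. Hence the three leaves are pairwise distinct proper subgroups of $D_{4n}$, each strictly contained in $\langle a^2,b\rangle$, which produces a $K_{1,3}$ in $\mathcal I(D_{4n})$ with center $\langle a^2,b\rangle$ and leaves $\langle a^2\rangle, \langle a^2b\rangle, \langle b\rangle$.

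The main ``obstacle'' is really just the distinctness check, and it is mild: one must use $n\geq 3$ to separate $\langle a^2\rangle$ from the two order-$2$ leaves (equivalently, to guarantee $|\langle a^2\rangle|\neq 2$). Everything else is direct manipulation in the dihedral relations, so the argument is short and essentially computational.
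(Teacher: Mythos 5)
Your proposal is correct and uses exactly the same claw as the paper, whose entire ``proof'' is the note preceding the lemma exhibiting the star with center $\langle a^2,b\rangle$ and leaves $\langle a^2\rangle$, $\langle a^2b\rangle$, $\langle b\rangle$. You have simply supplied the routine verifications (orders, containments, and distinctness via $n\geq 3$) that the paper omits.
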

%\begin{proof}
%	$\langle a^2, b\rangle$ is a subgroup of $D_{4n}$. 
%\end{proof}

\begin{pro}\label{inclusion graph t11}
	If $G$ is a finite non-solvable group, then $\mathcal I(G)$ has $K_{1,3}$ as a subgraph and $girth(\mathcal I(G))=3$.
\end{pro}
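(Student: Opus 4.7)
The plan is to reduce, via Theorem~\ref{inclusion graph 13}, to the case where $G$ itself is a minimal simple group (a nonabelian simple group whose every proper subgroup is solvable), and then to inspect each family in Thompson's classification. If $G$ is non-solvable, pick a nonabelian simple composition factor $S_1$ of $G$; if $S_1$ is not minimal simple it contains a proper non-solvable subgroup whose own nonabelian composition factor $S_2$ is strictly smaller than $S_1$. Iterating, since orders decrease, we reach a minimal simple group $S$ that is a sub-quotient of $G$. Because a sub-quotient has the form $H/N$ with $N\triangleleft H\leq G$, two applications of Theorem~\ref{inclusion graph 13} embed $\mathcal{I}(S)$ as a subgraph of $\mathcal{I}(G)$, so it suffices to produce both $C_3$ and $K_{1,3}$ in $\mathcal{I}(S)$ for each minimal simple $S$. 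By Thompson's theorem the candidates are $L_2(2^p)$ ($p$ prime), $L_2(3^p)$ ($p$ odd prime), $L_2(p)$ ($p>3$ prime with $5\mid p^2+1$), $Sz(2^p)$ ($p$ odd prime), and $L_3(3)$.

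The first and easier group of cases consists of $L_2(4)\cong A_5$, every $L_2(3^p)$ (via the subfield embedding $L_2(3)\hookrightarrow L_2(3^p)$), every $L_2(p)$ with $p>3$ (by Dickson's subgroup classification of $L_2(p)$), and $L_3(3)$ (whose point stabilizer contains $S_4$). In each of these, $S$ contains a proper copy of $A_4$. Taking this $A_4$ as the central vertex and selecting as leaves the unique Klein four-subgroup $V_4$ of $A_4$ together with any two of the four Sylow $3$-subgroups of $A_4$ gives an induced $K_{1,3}$ in $\mathcal{I}(S)$, since the three leaves, of orders $4,3,3$, are pairwise incomparable. The chain $A_4\supset V_4\supset \langle\sigma\rangle$ with $\sigma\in V_4$ an involution then yields a triangle $C_3$ in $\mathcal{I}(S)$.

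The remaining minimal simple groups are $L_2(2^p)$ with $p\geq 3$ and $Sz(2^p)$ with $p\geq 3$, neither of which contains $A_4$ (the Suzuki groups have no element of order~$3$ at all). For these, I would switch to the Sylow $2$-subgroup $P$ of $S$, a proper non-cyclic $2$-group: elementary abelian $(\mathbb{Z}_2)^p$ of order $2^p\geq 8$ in the first family, and a Suzuki $2$-group of order $2^{2p}\geq 64$ in the second. Since $P\not\cong \mathbb{Z}_2\times \mathbb{Z}_2$, Proposition~\ref{inclusion p1}(2) hands us $K_{1,3}\subseteq \mathcal{I}(P)\subseteq \mathcal{I}(S)$. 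For the triangle, whenever $|P|\geq 2^4$, i.e.\ for every Suzuki case and for $L_2(2^p)$ with $p\geq 4$, Proposition~\ref{inclusion p1}(1) (and Case~1 of its proof) already places $C_3$ in $\mathcal{I}(P)$; in the single remaining subcase $L_2(8)$ with $P\cong (\mathbb{Z}_2)^3$, the chain $\langle x\rangle\subset \langle x,y\rangle\subset P$ (with $x,y$ independent involutions) consists of three proper subgroups of $S$ and delivers the triangle directly in $\mathcal{I}(S)$. The girth assertion $girth(\mathcal{I}(G))=3$ is then immediate from the $C_3$ subgraph and the trivial lower bound. The main obstacle in the plan is precisely this Suzuki family: they are the only minimal simple groups with no $A_4$ subgroup, forcing us to bypass the uniform $A_4$ argument and instead exploit their exceptionally large Sylow $2$-subgroups through Proposition~\ref{inclusion p1} and Theorem~\ref{inclusion graph 13}.
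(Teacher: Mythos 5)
Your overall strategy coincides with the paper's: reduce to a minimal simple sub-quotient via Theorem~\ref{inclusion graph 13}, then run through Thompson's list. Where you differ is in the choice of witness subgroups, and your choices are mostly sound and arguably cleaner: you use $A_4$ uniformly for $L_2(4)$, $L_2(3^p)$ and $L_2(p)$ (the paper instead uses $(\mathbb Z_3)^p$ for $L_2(3^p)$ and the dihedral subgroups $D_{p\pm1}$ plus Lemma~\ref{inclusion graph l1} for $L_2(p)$), and for $Sz(2^p)$ you take the whole Suzuki $2$-group of order $2^{2p}$ rather than its elementary abelian center $(\mathbb Z_2)^p$. Both choices feed correctly into Proposition~\ref{inclusion p1}, and your separate treatment of the one low-order exception $L_2(8)$, where the Sylow $2$-subgroup $(\mathbb Z_2)^3$ has no triangle in its own inclusion graph but the chain $\langle x\rangle\subset\langle x,y\rangle\subset P$ of proper subgroups of $S$ supplies one, is a point the paper glosses over less explicitly. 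Your appeal to Dickson's classification for $A_4\leq L_2(p)$ is an external fact, but no more so than the paper's citation of the dihedral subgroups.

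There is, however, one concrete error: your justification for $L_3(3)$. The point stabilizer in $L_3(3)$ is the maximal parabolic $3^2{:}GL_2(3)$, and it contains neither $S_4$ nor even $A_4$: any $A_4$ inside it would meet the normal subgroup $3^2$ in a normal $3$-subgroup of $A_4$, hence trivially, and would therefore embed in $GL_2(3)$; but every Klein four-subgroup of $GL_2(3)$ contains the central involution $-I$, which no element of order $3$ can fuse with the non-central involutions, so $GL_2(3)$ has no $A_4$. The conclusion you need is nevertheless true, since $S_4$ occurs as a (separate) maximal subgroup of $L_3(3)$; alternatively you can follow the paper and use the unitriangular subgroup, an extraspecial group of order $27$ isomorphic to $(\mathbb Z_3\times\mathbb Z_3)\rtimes\mathbb Z_3$, to which Proposition~\ref{inclusion p1} applies directly. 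With that one citation repaired, your argument is complete.
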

\begin{proof}
	If we show that $\mathcal I(G)$ contains $K_2+\overline{K}_3$ as a subgraph, then it follows that $\mathcal I(G)$ contains $K_{1,3}$ as a subgraph and $girth(\mathcal I(G))=3$. As mentioned above, to prove this, it is enough to show that the inclusion graph of subgroups of  minimal simple groups contains $K_2+\overline{K}_3$ as a subgraph. We use the J. G. Thompson's classification of minimal simple groups given in~\cite{thomp} and check this for this list of groups.  We will denote the image of a matrix $A$ in $L_m(n)$ by $\overline{A}$.
	
	\noindent \textbf{Case 1:}  $G \cong L_2(q^p)$. 
	If $p=2$, then the only non-solvable group is $ L_2(4)$ and $ L_2(4) \cong A_5$. Also $A_4$ is a subgroup of $A_5$, and so by~\eqref{e8}, 
	$A_4$ together with its proper subgroups 
	forms $K_2+\overline{K}_3$ as a subgraph of $\mathcal I(G)$.
	If $p> 2$, then $ L_2(q^p)$ contains a subgroup isomorphic to $(\mathbb Z_q)^p$,
	namely the subgroup of matrices of the form $\overline{
		\left( \begin{smallmatrix}
		1 & a\\
		0 & 1
		\end{smallmatrix}\right)
	}$ with $a \in \mathbb F_{q^p}$. By Proposition~\ref{inclusion p1}, $\mathcal I((\mathbb Z_q)^p )$, $p >2$ 
	contains $K_{1,3}$ as a subgraph and so $(\mathbb Z_q)^p$ together with proper its subgroups forms 
	$K_2+\overline{K}_3$ as a subgraph of $\mathcal I(G)$. 
		
\noindent \textbf{Case 2:}: $G \cong  L_3(3)$. Note that $L_3(3) \cong SL_3(3)$.
	Let us consider the subgroup consisting of matrices of the form
	$\left(\begin{smallmatrix}
	1 & a & b\\
	0 & 1 & c\\
	0 & 0 & 1
	\end{smallmatrix}\right)$
	with $a, b, c \in \mathbb F_3$. This subgroup is isomorphic to the group 
	$(\mathbb Z_p \times \mathbb Z_p) \rtimes \mathbb Z_p$ with $p=3$. By Proposition~\ref{inclusion p1}, $\mathcal I((\mathbb Z_p
	\times \mathbb Z_p) \rtimes \mathbb Z_p)$ contains $K_{1,3}$ as a subgraph. So
	$(\mathbb Z_p \times \mathbb Z_p) \rtimes \mathbb Z_p$ together with its proper subgroups forms $K_2+\overline{K}_3$ as a subgraph of $\mathcal I(G)$.
	
	\noindent \textbf{Case 3:} $G \cong L_2(p)$. Note that here $H=\{\pm I\}$. We have to consider two subcases:
	
	\noindent \textbf{Subcase 3a:} $p \equiv 1  ~(\text{mod}~ 4)$. Then  $L_2(p)$ 
	has a  subgroup isomorphic to $D_{p-1}$\cite[p.~222]{boh-reid}. So, by Lemma~\ref{inclusion graph l1}, $D_{p-1}$ together with its proper subgroups forms $K_2+\overline{K}_3$ as a subgraph of $\mathcal I(G)$, when $p>5$. 
	If $p=5$, then $L_2(5) \cong A_5 \cong L_2(4)$, which we already dealt.
	
\noindent \textbf{Subcase 3b:} $p \equiv 3 ~(\text{mod}~ 4)$.  $L_2(p)$ 
	has  a subgroup isomorphic to $D_{p+1}$\cite[p.~222]{boh-reid}.  By Lemma~\ref{inclusion graph l1}, 
$D_{p+1}$ together with its proper subgroups forms $K_2+\overline{K}_3$ as a subgraph of $\mathcal I(G)$, when $p > 7$.
	If $p=7$, then $S_4$ is a maximal subgroup of $L_2(7)$. Since $A_4$ is a subgroup of $S_4$, we already dealt this case.
	
	\noindent \textbf{Case 4:}  $G \cong Sz(2^q)$. Then  $ Sz(2^q)$ has a subgroup isomorphic to 
	$(\mathbb Z_2)^q, q \geq 3$. By Proposition~\ref{inclusion p1}, $\mathcal I((\mathbb Z_2)^q )$, $q >2$ 
	contains $K_{1,3}$ as a subgraph; $(\mathbb Z_2)^q$ together with its proper subgroups forms 
	$K_2+\overline{K}_3$ as a subgraph of $\mathcal I(G)$. 
	
	Proof follows by putting together all the above cases.
	\end{proof}
Combining the Propositions~\ref{inclusion p1} --~\ref{inclusion graph t11}, we obtain the proof of Theorem~\ref{inclusion graphs 7}.


\begin{thebibliography}{00}

\bibitem{akbaris} S. Akbari, M. Habibi, A. Majidinya, and R. Manaviyat, The inclusion ideal graph of rings, {\it Comm. Algebra} 43 (2015) 2457--2465.

\bibitem{akbari_2} S. Akbari, F. Heydari and M. Maghasedi, The intersection graph of group, {\em J. Algebra. Appl.} 5 (2015) Article No. 1550065, 9 pp.

\bibitem{atlas} Atlas of Finite Group Representations, \texttt{http://brauer.maths.qmul.ac.uk/Atlas/
	(version  2)
	}

%\bibitem{bosak} J. Bosak, The graphs of semigroups in: Theory of graphs and application, Academic Press, New York, 1964, pp. 119--125.

\bibitem{boh-reid} J. P. Bohanon and  Les Reid, Finite groups with planar subgroup lattices, {\it J. Algebraic Combin.} 23 (2006) 207--223.

\bibitem{burn} W. Burnside, Theory of groups of finite order, Dover Publications, Cambridge, 1955.

\bibitem{csak} B. Cs$\acute{a}$k$\acute{a}$ny and G. Poll$\acute{a}$k, The graph of subgroups of a finite group (Russian),
{\em Czechoslovak Math. J.} 19 (1969) 241--247.

\bibitem{cole} F. N. Cole and J. W. Glover.  On groups whose orders are products of three
prime factors, {\it Amer. J. Math.} 15 (1893) 191--220.
\bibitem{harary} F. Harary, Graph Theory, Addison-Wesley, Philippines, 1969.

\bibitem{Shen} Rulin Shen and Enshi, Intersection graphs of subgroups of finite groups, {\em Czechoslovak Math. J.} 60 (2010) 945--950.

%\bibitem{scott} W. R. Scott, Group Theory, Dover, New York, 1964.

\bibitem{smith1}R. Schmidt, Subgroup lattices of groups, Expositions in Math., vol. 14,
de Gruyter, 1994.

\bibitem{smith} R. Schmidt, Planar subgroup lattices, Algebra Universalis 55 (2006) 3–-12.

\bibitem{star} C. L. Starr and G. E. Turner III, Planar groups, {\it J. Algebraic Combin}. 19 (2004), 283-–295.

\bibitem{thomp} J. G. Thompson, Nonsolvable finite groups all of whose local subgroups are solvable I,
{\it Bull. Amer. Math. Soc.} 74 (1968) 383--437.

\end{thebibliography}
\end{document}